\documentclass[reqno]{amsart}
\usepackage[utf8]{inputenc}
\usepackage[a4paper]{geometry}

\usepackage[english]{babel}
\usepackage[document]{ragged2e}
\usepackage{amsfonts}
\usepackage{amsthm}
\usepackage{amssymb, amsmath}
\usepackage{hyperref}

\usepackage{xcolor}
\usepackage{cleveref}
\usepackage{enumitem}

\theoremstyle{plain}
\newtheorem{theo}{Theorem}[section]

\newtheorem{prop}[theo]{Proposition}
\newtheorem{lemma}[theo]{Lemma}

\newtheorem{coro}[theo]{Corollary}

\theoremstyle{definition}
\newtheorem{rem}[theo]{Remark}
\newtheorem{example}[theo]{Example}

\newcommand{\D}{\mathbb{D}}

\title[Omitted Values for some subclasses of univalent mappings]{Omitted Values for some subclasses of univalent mappings}

\date{\today}

\author[H. Arbeláez]{Hugo Arbeláez}
\address{Hugo Arbeláez, Escuela de Matemáticas, Universidad Nacional de Colombia, Medellín, Colombia}
\email{hjarbela@unal.edu.co}

\author[R. Hernández]{Rodrigo Hernández}
\address{Rodrigo Hernández, Facultad de Ingeniería y Ciencias, Universidad Adolfo Ibáñez, Av. Padre Hurtado 750, Viña del Mar, Chile}
\email{rodrigo.hernandez@uai.cl}

\author[W. Sierra]{Willy Sierra}
\address{Willy Sierra, Departamento de Matemáticas, Universidad del Cauca, Popayán, Colombia}
\email{wsierra@unicauca.edu.co}

\thanks{The first author was supported by the Universidad Nacional de Colombia code Hermes 66371. The fourth author thanks the Universidad del Cauca for providing time for this work through research project VRI ID 6671.}

\begin{document}

\maketitle

\justifying

\begin{abstract}
We study the range of $\operatorname{Re}\{a_2 f(z)\}$ for normalized analytic functions $f$ in the unit disk belonging to several classes of conformal mappings. As our main contribution, we introduce the class $CC_\alpha$ of \emph{completely convex mappings of order $\alpha$}, defined by a uniform two-point starlikeness condition, and we estimate the range of $\operatorname{Re}\{a_2f(z)\}$ in terms of $\alpha$, for all $f\in CC_\alpha$ and $z\in \D$, generalizing the classical result of Fournier--Ma--Ruscheweyh, which is recovered for $\alpha=0$. We also determine omitted value sets for convex functions of order $\alpha$, spherically convex mappings, uniformly starlike functions, and Nehari classes $\mathcal{N}_t$. The proofs rely primarily on the Schwarz--Pick lemma applied to auxiliary functions constructed from the two-point kernel $zf'(z)/(f(z)-f(x))$.
\end{abstract}
\vspace{0.3cm}
\textbf{Key words:} Completely convex mappings, Convex mappings of order $\alpha$, Omitted Values, Spherically convex. \\

\textbf{Mathematics subject classification:} 30C45, 30C35. \\

\section{Introduction and main results}
 
The main theme of the paper concerns the study of values that are either assumed or omitted by functions belonging to certain classes of univalent mappings. This problem traces back to Koebe's one-quarter theorem \cite{GK2003}, which establishes that the image of the unit disk $\mathbb{D}=\left\lbrace z: |z|<1 \right\rbrace $ under any univalent function $f:\mathbb{D}\to\mathbb{C},$ normalized by $f(z) = z +a_2z^2+\cdots,$ contains a disk of radius one-quarter centered at the origin. Building on this classical result, the investigation of omitted values has become a central theme in geometric function theory. Two important contributions to this topic were made by Goodman \cite{G1949}, who established a sharp upper bound for the radius of the circle with a given center such that all values inside it are omitted by $f,$ and Jenkins \cite{J1953}, who studied the set of values on the circle $|w|=r,$ $1/4<r<1,$ that are omitted by normalized univalent functions. Decades later,  Chuaqui and Osgood \cite{CO1993}, in their studies on univalent functions and the Schwarzian derivative, showed that every normalized function in the Nehari class $\mathcal{N} = \{f : |S_f(z)| \leq 2(1-|z|^2)^{-2}\}$ does not take the value $-1/a_2$ in $\D$. This result was subsequently improved by Fournier \textit{et al.} \cite{FMR1997}, who proved that $\sigma(\mathcal{N})=\mathbb{C} \setminus\{-1\},$ where, for a class $\mathcal{W}$ of normalized analytic functions on $\mathbb{D}$, $\sigma(\mathcal{W})$ denotes the set of values taken by $a_2 f(z)$ as $f$ ranges over $\mathcal{W}$ and $z$ over $\mathbb{D}.$ In that same paper, the authors also established that for the subclass $\mathcal{C}$ of $\mathcal{N}$ consisting of convex functions, $\sigma(\mathcal{C})=\{w:\operatorname{Re}\{w\}>-1/2\}$. More precisely, they established the sharp inequality \[\operatorname{Re}\{a_2f(z)\}\geq-\frac{1}{2}+\frac{1}{2}(1-|z|^2) \left|\frac{f(z)}{z}\right|^2\geq -\frac{1}{2},\]
for all $f\in\mathcal{C}$ and $z\in\mathbb{D}.$ We also comment that, at the opposite extreme, for the class $S^\ast$ of normalized starlike mappings, $\sigma(S^\ast)=\mathbb{C}.$ In this paper we continue this line of research by establishing several results, among which we highlight estimates for $\operatorname{Re}\{a_2f(z)\}$ and $\operatorname{Re}\{f(z)/z\},$ where $f$ belongs to a number of geometrically natural classes. The main results focus on a new class of functions introduced from the notion of convexity of order $\alpha.$ The key observation is that a conformal map $f$ is convex if and only if $f(\D)$ is starlike with respect to each of its points, a global two-point condition given by
\[\operatorname{Re}\left\{\frac{2zf'(z)}{f(z)-f(x)}-\frac{z+x}{z-x}\right\}\geq 0, \qquad z, x\in\D.\]
Requiring $f(\D)$ to be starlike of order $\alpha$ with respect to all its points leads naturally to the class of \emph{completely convex mappings of order $\alpha$}, denoted $CC_\alpha,$ consisting of functions $f(z)=z+a_2z^2+\cdots$ that satisfy
\[\operatorname{Re}\left\{\frac{2zf'(z)}{f(z)-f(x)}-\frac{z+x}{z-x}\right\}\geq \alpha, \qquad z, x\in\D.\]
According to Theorem\,1 in \cite{S1970}, we have $CC_0 = \mathcal{C}.$ Letting $x\to z$ gives $CC_\alpha \subseteq C_\alpha$, and setting $x=0$ yields $CC_\alpha \subset S^*_{(1+\alpha)/2}$, where $C_\alpha$ and $S^\ast_\alpha$ denote the classical classes of convex and starlike functions of order $\alpha,$ introduced by Robertson \cite{Rob1936}:
\[\operatorname{Re}\left\{1+z\frac{f''}{f'}(z)\right\}\geq \alpha \qquad \text{and} \qquad \operatorname{Re}\left\{z\frac{f'}{f}(z)\right\}\geq \alpha,\]
respectively. I. S. Jack \cite{J1971} proved that $C_\alpha\subset S^\ast_{\beta}$ with $\beta=\frac{2\alpha-1+\sqrt{(2\alpha-1)^2+8}}{4}$; in particular, $\mathcal{C} = C_0 \subset S^\ast_{1/2}$. The class $CC_\alpha$ interpolates between $\mathcal{C}$ and the Möbius transformations, and, as we show, it provides sharper omitted value sets than $C_\alpha$ precisely because of its uniform geometric character.

The paper is organized as follows. In Section\,\ref{subsec:CC} we study the class $CC_\alpha$ and obtain estimates for $\operatorname{Re}\left\lbrace f(z)/z\right\rbrace$ and $\operatorname{Re}\left\lbrace a_2f(z) \right\rbrace,$ with $f\in CC_\alpha$ and $z\in\mathbb{D}.$ Similar results are derived for convex mappings of order $\alpha$ in Section\,\ref{convex order alpha}, for spherically convex mappings in Section\,\ref{subsec:sconv}, for uniformly starlike functions in Section\,\ref{subsec:UCV}, and finally, in Section\,\ref{subsec:Nehari} we address the Nehari class.

\section{Omitted values}
\label{sec:omitted}

\subsection{Completely convex mappings of order $\alpha$}
\label{subsec:CC}

For $\alpha\in [0,1)$, the class $CC_{\alpha}$ consists of analytic functions $f(z)= z+a_2z^2+\cdots$ on $\D$ satisfying
\begin{equation} \label{conv1}
\operatorname{Re}\,\left\{\frac{2zf'(z)}{f(z)-f(x)}-\frac{z+x}{z-x}\right\}\geq \alpha,\qquad z, x\in\D.
\end{equation}
As noted in the introduction, $CC_0=\mathcal{C}$, $CC_\alpha \subseteq C_\alpha$, and $CC_\alpha \subset S^*_{(1+\alpha)/2}$.

\begin{rem}
The condition \eqref{conv1} has a clean geometric meaning: $f \in CC_\alpha$ if and only if, for each $x \in \D$, the domain $f(\D)$ is starlike of order $(\alpha+1)/2$ with respect to the point $f(x)$. This is a \emph{global} two-point condition, in contrast to the \emph{infinitesimal} one-point condition that characterizes $C_\alpha$ (where convexity of order $\alpha$ is only tested as $x\to z$). The uniformity in $x$ is precisely what allows $CC_\alpha$ to yield sharper omitted value estimates.
\end{rem}

\begin{example}
 Let $T(z)=\dfrac{az+b}{cz+d}$ with $ad-bc=1$ and $\zeta:=-d/c\notin \overline{\D}$. A direct calculation gives
 \begin{equation*}
\frac{2zT'(z)}{T(z)-T(x)}-\frac{z+x}{z-x}=\frac{d-cz}{d+cz},    
\end{equation*}
and therefore
\begin{equation*}
  \operatorname{Re}\,\left\{\frac{2zT'(z)}{T(z)-T(x)}-\frac{z+x}{z-x}\right\} = \frac{|d|^2-|cz|^2}{|d+cz|^2}\geq \frac{|d|-|c|}{|d|+|c|}=\frac{|\zeta|-1}{|\zeta|+1}.
\end{equation*}
Choosing $|\zeta|=\dfrac{1+\alpha}{1-\alpha}$ gives $T\in CC_{\alpha}$, showing that the Möbius transformations with pole outside $\overline{\D}$ belong to $CC_\alpha$ for a suitable $\alpha$ depending on the distance of the pole from the disk. This reveals something noteworthy: for a fixed $\alpha$, the class $CC_\alpha$ does not contain all Möbius transformations. That is, this family is remarkably restrictive, which explains why it appears so naturally across the classical literature on conformal functions.
\end{example}

\begin{rem}\label{rem:subordination}
Condition \eqref{conv1} is equivalent to the representation
 \begin{equation}\label{Obs2}
  \frac{2zf'(z)}{f(z)-f(x)}-\frac{z+x}{z-x}=\frac{1+(1-2\alpha)\omega(x,z)}{1-\omega(x,z)},
 \end{equation}
 where $\omega:\D \times \D \to \D$ with $\omega(x,0)=0$. Writing $\omega(x,z)=z\delta(x,z)$ then yields
 \begin{equation*}
  \frac{f'(z)}{f(z)-f(x)}-\frac{1}{z-x}=\frac{(1-\alpha)\delta(x,z)}{1-z\delta(x,z)},   
 \end{equation*}
 for all $x,z\in \D$. In the language of differential subordinations \cite{MM2000}, the equation \eqref{Obs2} states that for each fixed $x$, the function $z\mapsto \frac{2zf'(z)}{f(z)-f(x)}-\frac{z+x}{z-x}$ is subordinate to the right half-plane mapping $\frac{1+(1-2\alpha)z}{1-z}$, uniformly in $x\in \D$.
\end{rem}
\begin{example}
Let $a\in \mathbb{R}^+$ and $f(z)= z+az^n$, $z\in\D$, $n\in \mathbb{N}$.
We observe that
\begin{align*}
 \frac{f'(z)}{f(z)-f(x)}-\frac{1}{z-x}
&= \frac{1 + a n z^{\,n-1}}{z - x + a(z^n - x^n)}- \frac{1}{z-x}\\
&= \frac{a n z^{\,n-1} - a\big(z^{n-1} + z^{n-2}x + z^{n-3}x^2 + \cdots + x^{n-1}\big)}
{(z-x)\big(1 + a(z^{n-1} + z^{n-2}x + \cdots + x^{n-1})\big)}\\
&=a \frac{z^{n-2} + z^{n-3}x + z^{n-4}x^2 + \cdots + (z^{n-2} + z^{n-3} + \cdots + x^{n-2}) (x+z)}
{1 + a\big(z^{n-1} + z^{n-2}x + \cdots + z x^{n-2} + x^{n-1}\big)}\\
&= \frac{aP(x,z)}{1 + aQ(x,z)}.
\end{align*}
Thus, from the previous observation, $f\in CC_\alpha$ if
\[
\frac{(1-\alpha)\delta(x,z)}{1-z\delta(x,z)}= 
\frac{aP(x,z)}{1+aQ(x,z)},
\]
and therefore,
\[
\delta(x,z)=
\frac{aP(x,z)}{1-\alpha+a\left[(1-\alpha)Q(x,z)+zP(x,z)\right]\,}.
\]
It follows that
\begin{align}\label{exam2}
|\delta(x,z)|&\leq \frac{a|P(x,z)|}{1-\alpha-a\big|(1-\alpha)Q(x,z)+zP(x,z)\big|} \nonumber\\
&\leq \frac{\tfrac{1}{2} an(n-1)}{1-\alpha-a\left[(1-\alpha)n+\tfrac{n(n-1)}{2}\right]}.
\end{align}
The right side in (\ref{exam2}) is less than or equal to $1$ if $a\leq\dfrac{1-\alpha}{n(n-\alpha)}$.
\end{example}

\begin{lemma}\label{start}
If $f \in CC_\alpha$, then
\[\frac{1}{(1+|z|)^{1-\alpha}}\leq \left|\frac{f(z)}{z}\right|\leq \frac{1}{(1-|z|)^{1-\alpha}}\,,\]
for all $z\in\mathbb{D}$.
\end{lemma}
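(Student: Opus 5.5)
Setting $x=0$ in \eqref{conv1} gives
\[
\operatorname{Re}\left\{\frac{2zf'(z)}{f(z)}-1\right\}\geq \alpha,
\qquad\text{i.e.}\qquad
\operatorname{Re}\left\{\frac{zf'(z)}{f(z)}\right\}\geq \frac{1+\alpha}{2}.
\]
This is only starlikeness of order $(1+\alpha)/2$. It yields the growth bounds $(1\pm|z|)^{-(1-\alpha)}|z|$, but only after a subordination argument, so I would set that argument up directly from Remark~\ref{rem:subordination} with $x=0$. There, $\omega(0,z)=z\delta(0,z)$ and
\[
\frac{f'(z)}{f(z)}-\frac{1}{z}=\frac{(1-\alpha)\delta(z)}{1-z\delta(z)},
\]
where $\delta(z):=\delta(0,z)$ is analytic in $\D$ with $|\delta|\le 1$ (Schwarz lemma applied to $\omega(0,\cdot)$). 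Write $g(z)=\log(f(z)/z)$, with the branch fixed by $g(0)=0$; this is possible because $f$ is starlike, so $f(z)/z\neq 0$. Then
\[
g'(z)=\frac{(1-\alpha)\,\delta(z)}{1-z\delta(z)}.
\]

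The key step is to integrate along the radius $t\mapsto tz$, $t\in[0,1]$, with $|z|=r$:
\[
\log\left|\frac{f(z)}{z}\right|=\operatorname{Re} g(z)=(1-\alpha)\int_0^1 \operatorname{Re}\left\{\frac{z\,\delta(tz)}{1-tz\,\delta(tz)}\right\}dt .
\]
Put $u=tz\,\delta(tz)$, so $|u|\le tr$, and the integrand is $\operatorname{Re}\{u/(1-u)\}/t$. The map $u\mapsto u/(1-u)$ sends the disk $|u|\le \rho$ into the disk with diameter $[-\rho/(1+\rho),\,\rho/(1-\rho)]$. Hence
\[
-\frac{\rho}{1+\rho}\le \operatorname{Re}\frac{u}{1-u}\le \frac{\rho}{1-\rho},\qquad \rho=tr .
\]
Dividing by $t$ gives the bounds $-r/(1+tr)$ and $r/(1-tr)$. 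Integrating over $t\in[0,1]$ gives $-\log(1+r)$ and $-\log(1-r)$. Multiplying by $(1-\alpha)$ and exponentiating yields exactly
\[
(1+r)^{-(1-\alpha)}\le \left|\frac{f(z)}{z}\right|\le (1-r)^{-(1-\alpha)} .
\]

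The only delicate points are routine. First, I need $\delta$ analytic with $|\delta|\le1$; this follows from $|\omega(0,z)|<1$ and $\omega(0,0)=0$ via Schwarz. Second, I need the elementary disk-image estimate for $u/(1-u)$, which is checked on the circle $|u|=\rho$. Sharpness is not required, but one could observe that $f(z)=z(1-z)^{-(1-\alpha)}$ attains both bounds on the real axis.
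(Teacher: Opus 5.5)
Your argument is correct and is essentially the paper's proof. The paper also sets $x=0$ to get $f\in S^*_{(1+\alpha)/2}$, then cites the growth theorem for starlike functions of order $\beta$ (Graham--Kohr, p.~57); your subordination and radial-integration computation simply reproves that theorem in-line.

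One caution on your closing aside: for $0<\alpha<1$, the function $z(1-z)^{-(1-\alpha)}$ is not in $CC_\alpha$. It is not even convex, since the boundary values of $\operatorname{Re}\{1+zf''/f'\}$ on $|z|=1$ tend to $\alpha/2-\alpha/(1-\alpha)<0$ as $z\to 1$. So it does not show that the bounds are sharp in $CC_\alpha$; it does so only for $\alpha=0$.
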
 

\begin{proof}
Since $f \in \mathcal{S}^*_{(1+\alpha)/2}$, it follows that
\[
\frac{|z|}{(1+|z|)^{2(1-\frac{1+\alpha}{2})}}\leq|f(z)|\leq \frac{|z|}{(1-|z|)^{2(1-\frac{1+\alpha}{2})}},
\qquad z \in \mathbb{D}.\] See \cite[Pag. 57]{GK2003}
\end{proof}

An important consequence of the following result is to generalize the known result to convex functions $\operatorname{Re}\{f(z)/z\}>1/2$.

\begin{theo}\label{CCC}
Let $\alpha\in [0,1)$ and $f\in CC_{\alpha},$ then
 \begin{equation*}
 \operatorname{Re}\left\{\dfrac{f(z)}{z}\right\}\geq m(|z|), \qquad z\in\mathbb{D},
 \end{equation*}
 where $m(t)=\max\left\lbrace (1/2)^{1-\alpha},\, p(t) \right\rbrace$ and $p(z)=\displaystyle \frac{1}{2}\left[1+\frac{1-(1-\alpha)^2z^2}{(1+z)^{2(1-\alpha)}}\right],$ $z\in\mathbb{D}.$
\end{theo}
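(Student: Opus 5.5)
The plan is to prove the two lower bounds $\operatorname{Re}\{f(z)/z\}\ge p(|z|)$ and $\operatorname{Re}\{f(z)/z\}\ge (1/2)^{1-\alpha}$ separately; together they give $m(|z|)$. The bound $p$ comes from the two-point condition \eqref{conv1} together with Lemma~\ref{start}, and the constant comes from the inclusion $CC_\alpha\subset S^*_{(1+\alpha)/2}$.

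\emph{Step 1 (bound via $p$).} Fix $x\in\D\setminus\{0\}$ and use Remark~\ref{rem:subordination} with the roles of the variables reversed: regard the left side of \eqref{Obs2} as a function of $z$ near $z=0$. Since $f(0)=0$, we have $2zf'(z)/(f(z)-f(x)) = -2z/f(x)+O(z^2)$ and $(z+x)/(z-x) = -1-2z/x+O(z^2)$. Hence the left side of \eqref{Obs2} equals $1+2z\bigl(1/x-1/f(x)\bigr)+O(z^2)$. The right side equals $1+2(1-\alpha)\omega_z(x,0)z+O(z^2)$, so
\[
\delta(x,0)=\omega_z(x,0)=\frac{1}{1-\alpha}\left(\frac1x-\frac1{f(x)}\right).
\]
By Schwarz's lemma applied to $\omega(x,\cdot)$, we get $|\delta(x,0)|\le 1$, that is, $\bigl|x/f(x)-1\bigr|\le(1-\alpha)|x|$.

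Now write $w=f(x)/x$ and $r=|x|$. Then $|1-w|^2\le (1-\alpha)^2r^2|w|^2$, and expanding this gives
\[
\operatorname{Re}\{w\}\ \ge\ \tfrac12\Bigl[1+\bigl(1-(1-\alpha)^2r^2\bigr)|w|^2\Bigr],
\]
which is the $CC_\alpha$ analogue of the Fournier--Ma--Ruscheweyh inequality. Since $1-(1-\alpha)^2r^2>0$, inserting the lower bound $|w|\ge (1+r)^{-(1-\alpha)}$ from Lemma~\ref{start} gives exactly $\operatorname{Re}\{f(x)/x\}\ge p(r)$.

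\emph{Step 2 (the constant $(1/2)^{1-\alpha}$).} Since $f\in S^*_\beta$ with $\beta=(1+\alpha)/2$, we have $zf'/f\prec (1+(1-2\beta)z)/(1-z)$. Integrating $\bigl(zf'/f-1\bigr)/z$, and using that the dominant $\log(1-z)^{-(1-\alpha)}$ is convex, the standard subordination lemma (Suffridge/Hallenbeck--Ruscheweyh type, \cite{MM2000}) yields
\[
\frac{f(z)}{z}\prec \frac{1}{(1-z)^{1-\alpha}}.
\]
It then suffices to show that $\operatorname{Re}\{u^{\gamma}\}\ge (1/2)^{\gamma}$ on $\{\operatorname{Re} u>1/2\}$ for $\gamma=1-\alpha\in(0,1]$, where $u=1/(1-z)$ ranges over this half-plane. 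By the minimum principle for the harmonic function $\operatorname{Re}\{u^\gamma\}$, it is enough to check the boundary line $u=\tfrac12+it$. There $\operatorname{Re}\{u^\gamma\}=(\tfrac14+t^2)^{\gamma/2}\cos(\gamma\arctan 2t)$. Setting $t=\tfrac12\tan\theta$, this becomes $2^{-\gamma}\cos(\gamma\theta)/\cos^\gamma\theta$. Because $\cos(\gamma\theta)\ge\cos^\gamma\theta$ for $0\le\gamma\le1$ and $|\theta|<\pi/2$ (concavity of $\log\cos$), the minimum $2^{-\gamma}$ is attained at $t=0$. The value at infinity is also at least this, so the bound holds.

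\emph{Expected obstacle.} The main difficulty is Step 2: making the subordination $f/z\prec(1-z)^{-(1-\alpha)}$ rigorous, and checking the elementary inequality $\cos(\gamma\theta)\ge\cos^\gamma\theta$ cleanly. If the subordination route is problematic, the fallback is to try to extract the constant directly from Step 1. The inequality $|1/w-1|\le(1-\alpha)r<1-\alpha$ places $w$ in the image of a disk under inversion, and that region has minimal real part $1/(2-\alpha)$. This value is weaker than $(1/2)^{1-\alpha}$, though, so the starlikeness-of-order-$(1+\alpha)/2$ argument appears necessary for the constant.
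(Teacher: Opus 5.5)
Your proof is correct and is essentially the paper's argument. Your Step 1 is the paper's part ii): the paper passes through the second-coefficient Schwarz--Pick bound \eqref{conv2} but only uses that its right-hand side is nonnegative, which is exactly your bound $|\delta(x,0)|\le 1$. Your Step 2 is part i), with the subordination $f(z)/z\prec(1-z)^{-(1-\alpha)}$ playing the role of the paper's representation $f(z)/z=(g(z)/z)^{1-\alpha}$ with $g\in S^*_{1/2}$, and the same estimate $\cos((1-\alpha)\theta)\ge\cos^{1-\alpha}\theta$ finishing the argument.
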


\begin{proof}
Before we proceed with the proof, let us remark that when $\alpha=0,$ $p(|z|)\geq (1/2)^{1-\alpha},$ and when $\alpha=1,$ $p(|z|)= (1/2)^{1-\alpha},$ for all $z\in\mathbb{D}.$ In the case $0<\alpha<1,$ we have $p(|z|)\geq (1/2)^{1-\alpha},$ for $0\leq |z|\leq a,$ where $a$ is the root of the equation $p(|z|)=(1/2)^{1-\alpha}.$ Consequently, to prove the theorem we will proceed to show that conditions
\[ i)\;\; \operatorname{Re}\left\{\dfrac{f(z)}{z}\right\}\geq \left(\dfrac{1}{2}\right)^{1-\alpha}\qquad\text{and}\qquad ii)\;\; \operatorname{Re}\left\{\dfrac{f(z)}{z}\right\}\geq p(|z|),\]
are satisfied for all $z\in\mathbb{D}.$
  
We begin by proving condition i). By (\ref{conv1}), with $x=0$, we have that
\[
\operatorname{Re}\left\{\frac{z f'}{f}(z)\right\} \geq \frac{1+\alpha}{2},
\qquad z \in \mathbb{D},
\]
which is equivalent to
\[
\operatorname{Re}\left\{2\frac{\,\frac{z f'}{f}(z)-\alpha}{1-\alpha}-1\right\}\geq 0.
\]
We define
\begin{equation}\label{start1}
 \frac{zg'}{g}(z)=\frac{\frac{z f'}{f}(z)-\alpha}{1-\alpha}.   
\end{equation}
Thus, $g \in \mathcal{S}^*_\frac{1}{2}$, which implies $\operatorname{Re}\left\{\dfrac{g(z)}{z}\right\} > \dfrac{1}{2}$ (see \cite{R1982}, p. 49). On the other hand, it follows from (\ref{start1}) that
\[
\frac{f(z)}{z}=\left(\frac{g(z)}{z}\right)^{1-\alpha},
\]
whence
\begin{equation}\label{kk}
\operatorname{Re}\left\{\frac{f(z)}{z}\right\}
= \left|\frac{g(z)}{z}\right|^{1-\alpha}
\cos\big((1-\alpha)\theta\big)
> \left(\frac{1}{2}\right)^{1-\alpha}
\frac{\cos\big((1-\alpha)\theta\big)}{\cos^{1-\alpha}\theta}\geq \left(\frac{1}{2}\right)^{1-\alpha},  
\end{equation}
for all $z\in \D$, where $\theta=\arg \Big\{\dfrac{g(z)}{z}\Big\}$. The last inequality in \eqref{kk} it follows from the concavity of $\log(\cos(x))$ on $(-\pi/2,\pi/2)$ applied to $\cos((1-\alpha)\theta) = \cos((1-\alpha)\theta + \alpha\cdot 0) \geq \cos^{1-\alpha}\theta\cdot\cos^\alpha(0) = \cos^{1-\alpha}\theta$.

Next, we proceed to prove condition ii). For all $x, z\in \D$, we define 
\begin{equation*}
G(z,x)= \left\{ \begin{array}{lrl} \dfrac{2zf'(z)}{f(z)-f(x)}-\dfrac{z+x}{z-x}; &  z \neq x, \\ \\ \dfrac{zf''}{f'}(z)+1; & z=x. \end{array} \right. 
\end{equation*}  
For fixed $x$, let $G(z)=G(z,x)$ which is analytic. We observe that $\operatorname{Re}\,G(z)\geq \alpha$ and $G(0)=1$. Let $H:\D \to \D$ be defined by 
\begin{equation*}
H(z)=\frac{G(z)-1}{G(z)+1-2 \alpha}.
\end{equation*}
A straightforward computation shows that
$$H'(0)=\frac{G'(0)}{2(1-\alpha)}=\frac{a_1(G)}{2(1-\alpha)}$$
and
$$\frac{H''(0)}{2}=\frac{(1-\alpha)^2G''(0)-(1-\alpha)(G'(0))^2}{4(1-\alpha)^3}=\frac{a_2(G)}{2(1-\alpha)}-\frac{a_1^2(G)}{4(1-\alpha)^2}.$$
By the Schwarz--Pick lemma
\begin{equation}\label{conv2}
  |2(1-\alpha)a_2(G)- a_1^2(G)|\leq 4(1-\alpha)^2-|a_1^2(G)|.
\end{equation}
Since $G(z)=2\left[\dfrac{zf'(z)}{f(z)-f(x)}-\dfrac{x}{z-x}\right]-1$, we can see, as in \cite{FMR1997}, that 
\begin{equation*}
a_1(G)=2\left(\frac{1}{x}-\frac{1}{f(x)}\right) \quad \text{and} \quad a_2(G)=2\left(\frac{1}{x^2}-\frac{1}{f(x)}\left(2a_2+\frac{1}{f(x)}\right)\right).
\end{equation*}
Replacing in (\ref{conv2}) and simplifying, we obtain
\begin{equation}\label{conv3}
 \left|\frac{2f(x)}{x}+\alpha -2(1-\alpha)a_2f(x)-\alpha\left(\frac{f(x)}{x}\right)^2-2\right|\leq (1-\alpha)^2|f(x)|^2-\left|\frac{f(x)}{x}\right|^2+2\operatorname{Re}\left\{\frac{f(x)}{x}\right\}-1,
\end{equation}
from this and Lemma \ref{start}, it follows
\begin{equation*}
 2\operatorname{Re}\,\left\{\frac{f(x)}{x}\right\}\geq 1+\left|\frac{f(x)}{x}\right|^2(1-(1-\alpha)^2|x|^2)\geq 1+\dfrac{1}{(1+|x|)^{2(1-\alpha)}}(1-(1-\alpha)^2|x|^2). 
\end{equation*}
\end{proof}

\begin{prop} \label{F_a1}
  Let $f\in CC_{\alpha}$, $\alpha\in [0,1)$, then for all $a\in \D$
  \begin{equation*}
      F_a(z):=\frac{az}{f(a)}\frac{f(z)-f(a)}{z-a}
  \end{equation*}
is starlike of order $(\alpha +1)/2$. Also, $\operatorname{Re}\left\{\dfrac{F_a(z)}{z}\right\}  > \left(\dfrac{1}{2}\right)^{1-\alpha}$ for all $z\in \D$.
\end{prop}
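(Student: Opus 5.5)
Fix $a\in\D$. By construction $F_a$ is analytic in $\D$, since the singularity at $z=a$ is removable with value $af'(a)/f(a)$. We have $F_a(0)=\frac{a}{f(a)}\cdot\frac{-f(a)}{-a}=0$. A first-order expansion gives $F_a'(0)=\frac{a}{f(a)}\cdot\frac{f(0)-f(a)}{0-a}=1$, so $F_a$ is normalized. Moreover $F_a(z)\neq 0$ for $z\neq 0$ because $f$ is univalent (as $CC_\alpha\subseteq C_\alpha\subset\mathcal S$). The plan is to compute the logarithmic derivative of $F_a$ directly and recognize the two-point kernel \eqref{conv1}:
\[
\frac{zF_a'(z)}{F_a(z)} = 1+\frac{zf'(z)}{f(z)-f(a)}-\frac{z}{z-a}.
\]
Using $\frac{z}{z-a}=\frac12\left(1+\frac{z+a}{z-a}\right)$, this becomes
\[
\frac{zF_a'(z)}{F_a(z)}=\frac12+\frac12\left[\frac{2zf'(z)}{f(z)-f(a)}-\frac{z+a}{z-a}\right].
\]
By \eqref{conv1} with $x=a$, the bracket has real part $\ge\alpha$. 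Hence $\operatorname{Re}\{zF_a'/F_a\}\ge(1+\alpha)/2$ for $z\neq a$, and at $z=a$ by continuity, which uses the function $G(z,x)$ on the diagonal from the proof of Theorem \ref{CCC}. This gives $F_a\in S^*_{(1+\alpha)/2}$. The order is strict, with $>$, unless $F_a$ is extremal; the maximum principle for harmonic functions yields strict inequality in the open disk unless the real part is constant.

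For the second claim I would repeat verbatim part i) of the proof of Theorem \ref{CCC}, which only used that the function is normalized starlike of order $(1+\alpha)/2$. Define $g$ by $\frac{zg'}{g}=\frac{zF_a'/F_a-\alpha}{1-\alpha}$, so that $g\in S^*_{1/2}$ and $\operatorname{Re}\{g(z)/z\}>1/2$. Then $F_a(z)/z=(g(z)/z)^{1-\alpha}$, where the principal branch is legitimate since $g(z)/z$ has positive real part. The concavity of $\log\cos$ argument in \eqref{kk} then gives $\operatorname{Re}\{F_a(z)/z\}>(1/2)^{1-\alpha}$.

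The only delicate point is the behaviour at $z=a$: the kernel identity must hold there as well, and the starlikeness inequality must be transported to the removable singularity. Continuity of $G(\cdot,a)$ handles this. A secondary point is the strictness of the final inequality, which comes from the strict inequality $\operatorname{Re}\{g/z\}>1/2$ already used in \eqref{kk}.
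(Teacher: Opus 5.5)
Your proof is correct and is essentially the paper's argument. Your identity $zF_a'/F_a=\tfrac12+\tfrac12\bigl[\tfrac{2zf'(z)}{f(z)-f(a)}-\tfrac{z+a}{z-a}\bigr]$ is the paper's computation rewritten, so \eqref{conv1} with $x=a$ gives the order $(1+\alpha)/2$; the bound on $\operatorname{Re}\{F_a(z)/z\}$ then comes from repeating part i) of the proof of Theorem~\ref{CCC}, exactly as in the paper, while your remarks on normalization and on the removable point $z=a$ fill in details the paper leaves implicit.
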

\begin{proof}
    By direct calculation
\begin{equation*}
    \frac{zF'_a}{F_a}(z)=\frac{zf'(z)}{f(z)-f(a)}-\frac{a}{z-a}.
\end{equation*}
So, by (\ref{conv1})
\begin{equation*}
    \alpha\leq \operatorname{Re}\,\left\{\frac{2zf'(z)}{f(z)-f(a)}-\frac{2a}{z-a}-1\right\}=2\operatorname{Re}\left\{\frac{zF'_a}{F_a}(z)\right\}-1,
\end{equation*}
and it follows that $\operatorname{Re}\left\{\dfrac{zF'_a}{F_a}(z)\right\}\geq (\alpha +1)/2$ for all $z\in \D$. The last statement follows as in the proof of Theorem \ref{CCC} item i).
\end{proof}

\begin{coro}
  Let $f\in CC_{\alpha}$, $\alpha\in [0,1)$, then for all $a\in\D$, the function $h(z)=z^{-1}F^2_a(z)$ is starlike of order $\alpha$.
 \end{coro}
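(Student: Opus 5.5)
The proof is a direct logarithmic-derivative computation that builds on Proposition \ref{F_a1}. Write $h(z)=z^{-1}F_a^2(z)$. Since $F_a(z)=z+\cdots$ is starlike, it is univalent and vanishes only at $z=0$, where the zero is simple. Hence $F_a(z)/z$ is analytic and nonvanishing in $\D$ with value $1$ at the origin. It follows that $h(z)=z\,(F_a(z)/z)^2$ is analytic in $\D$, satisfies $h(0)=0$, $h'(0)=1$, and vanishes only at $0$. So $h$ is a normalized function and it makes sense to test the starlikeness condition $\operatorname{Re}\{zh'/h\}\ge\alpha$.

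Next I differentiate logarithmically:
\[
\frac{zh'}{h}(z)=2\,\frac{zF_a'}{F_a}(z)-1 .
\]
Taking real parts and using Proposition \ref{F_a1}, which gives $\operatorname{Re}\{zF_a'/F_a\}\ge(1+\alpha)/2$ in $\D$, I obtain
\[
\operatorname{Re}\left\{\frac{zh'}{h}(z)\right\}\ge 2\cdot\frac{1+\alpha}{2}-1=\alpha, \qquad z\in\D .
\]
Equivalently, by the identity in the proof of Proposition \ref{F_a1}, $zh'/h$ is exactly $G(z,a)$, which has real part at least $\alpha$ by (\ref{conv1}).

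There is no real obstacle here. The only point needing care is verifying that $h$ is analytic and normalized, which follows from the starlikeness, and hence univalence, of $F_a$ established in Proposition \ref{F_a1}. Once $\operatorname{Re}\{zh'/h\}\ge\alpha$ with $h(0)=0$ and $h'(0)=1$ is in hand, $h$ is starlike of order $\alpha$ by Robertson's definition, since for $\alpha\ge 0$ that condition already forces univalence.
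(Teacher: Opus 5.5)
Your proof is correct and is essentially the paper's argument: the paper's proof consists only of the identity $\operatorname{Re}\{zh'/h\}=2\operatorname{Re}\{zF_a'/F_a\}-1$ combined with Proposition \ref{F_a1}. Your checks that $h$ is analytic, normalized, and zero-free away from the origin, and your observation that $zh'/h=G(z,a)$, fill in details the paper leaves implicit.
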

   
  \begin{proof}
  It is enough to observe that
 \begin{equation*}
\operatorname{Re}\left\{\dfrac{zh'}{h}(z)\right\}=2\operatorname{Re}\left\{\dfrac{zF'_a}{F_a}(z)\right\}-1
\end{equation*}
 for all $z\in \D$.
\end{proof}
Note that when $a=0$, $F_0(z)=f(z)$, thus $z^{-1}f^2(z)\in S^*_\alpha$ for every $f\in CC_\alpha$.

\begin{coro}
 Let $f\in CC_{\alpha}$, $\alpha\in [0,1)$, and $|z|=r<1$. Then
 \begin{equation}\label{conv4}
     \frac{1}{(1+r)^{1-\alpha}}\leq \left|\frac{zf'(z)}{f(z)}\right| \leq \frac{1}{(1-r)^{1-\alpha}}.
 \end{equation}
\end{coro}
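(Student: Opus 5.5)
The plan is to read the quantity $zf'(z)/f(z)$ off the auxiliary function $F_a$ of Proposition \ref{F_a1}, evaluated at the base point $a$ itself. Once that is done, the growth theorem for starlike functions of order $(1+\alpha)/2$ gives both inequalities in \eqref{conv4} at once.

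First I check that $F_a$ is a normalized analytic function on $\D$. The singularity of $\frac{f(z)-f(a)}{z-a}$ at $z=a$ is removable, with value $f'(a)$. For $a\neq 0$ we have $f(a)\neq 0$, since $f$ is univalent as a convex function and vanishes only at $0$. Near $z=0$,
\[
F_a(z)=\frac{az}{f(a)}\cdot\frac{f(z)-f(a)}{z-a}=\frac{az}{f(a)}\cdot\frac{-f(a)+O(z)}{-a+O(z)}=z+O(z^2),
\]
so $F_a(0)=0$ and $F_a'(0)=1$. The case $a=0$ is handled either by $F_0=f$ or by continuity in $a$.

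Next, I evaluate $F_a$ at $z=a$. By the removable singularity,
\[
F_a(a)=\frac{a^2f'(a)}{f(a)}, \qquad\text{hence}\qquad \frac{F_a(a)}{a}=\frac{af'(a)}{f(a)}.
\]

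By Proposition \ref{F_a1}, $F_a\in\mathcal{S}^*_{(1+\alpha)/2}$. The same growth estimate used in the proof of Lemma \ref{start} (see \cite[Pag. 57]{GK2003}) therefore gives
\[
\frac{1}{(1+|w|)^{1-\alpha}}\leq\left|\frac{F_a(w)}{w}\right|\leq\frac{1}{(1-|w|)^{1-\alpha}},\qquad w\in\D.
\]
Taking $w=a$ and renaming $a=z$ with $|z|=r$ yields \eqref{conv4}.

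I do not expect a serious obstacle. The only points needing care are the verification that $F_a$ is genuinely normalized and analytic, so that the growth theorem applies to it, and the degenerate case $z=0$, where both sides of \eqref{conv4} reduce to $1$.
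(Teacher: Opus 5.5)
Your proof is correct and follows the paper's argument: both apply the growth theorem for $\mathcal{S}^*_{(1+\alpha)/2}$ to $F_a$ from Proposition \ref{F_a1} and read off $zf'(z)/f(z)$ on the diagonal. The only difference is cosmetic. The paper fixes $z$ and lets $a\to z$, while you evaluate $F_a$ directly at its removable singularity $w=a$, which avoids taking a limit.
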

\begin{proof}
Fixed $z\in \D$. Since $F_a\in S^*_{(\alpha+1)/2}$ for all $a\in \D$, we have by Theorem 2.3.7 in \cite{GK2003} that
\begin{equation*}
    \frac{r}{(1+r)^{1-\alpha}}\leq \left|\frac{az}{f(a)}\frac{f(z)-f(a)}{z-a}\right| \leq \frac{r}{(1-r)^{1-\alpha}}. 
\end{equation*}
 Now, (\ref{conv4}) it follows when $a$ tends to $z$.
\end{proof}

\begin{coro}
 Let $f\in CC_{\alpha}$, $\alpha\in [0,1)$, then
 \begin{equation} \label{Coro3}
\left|\frac{f(z)}{z}-\frac{f(a)}{a}\right|\leq |f(z)-f(a)|+(1-2^\alpha)\frac{|z-a|}{(1-|a|)^{1-\alpha}}, \quad a,z \in \D.
 \end{equation}
\end{coro}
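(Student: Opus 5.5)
The plan is to rewrite the left-hand side of \eqref{Coro3} in terms of the auxiliary function $F_a$ of Proposition~\ref{F_a1}, and then combine the two estimates already available: $\operatorname{Re}\{F_a(z)/z\}>(1/2)^{1-\alpha}$ from Proposition~\ref{F_a1}, and $|f(a)/a|\le (1-|a|)^{-(1-\alpha)}$ from Lemma~\ref{start}.

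\textbf{Algebraic identity.} Fix $a\in\D\setminus\{0\}$ and $z\ne a$; the cases $a=0$ and $z=a$ follow by continuity. We have
\[
\frac{f(z)}{z}-\frac{f(a)}{a}=\frac{a\bigl(f(z)-f(a)\bigr)-(z-a)f(a)}{za},
\qquad
\frac{F_a(z)}{z}=\frac{a}{f(a)}\,\frac{f(z)-f(a)}{z-a}.
\]
Substituting $f(z)-f(a)=(z-a)\,\frac{f(a)}{a}\,\frac{F_a(z)}{z}$ writes the difference as
\[
\frac{f(z)}{z}-\frac{f(a)}{a}=(z-a)\,\frac{f(a)}{a}\cdot\frac{1}{z}\Bigl(\frac{F_a(z)}{z}-1\Bigr),
\qquad
f(z)-f(a)=(z-a)\,\frac{f(a)}{a}\,\frac{F_a(z)}{z}.
\]
Dividing, the desired inequality becomes a comparison between $\bigl|\frac{1}{z}\bigl(\frac{F_a(z)}{z}-1\bigr)\bigr|$ and $\bigl|\frac{F_a(z)}{z}\bigr|$. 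The error term is measured against $|f(a)/a|$, which Lemma~\ref{start} bounds by $(1-|a|)^{-(1-\alpha)}$.

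\textbf{Identifying the constant.} Since $2\,(1/2)^{1-\alpha}=2^\alpha$, we have $1-2^\alpha=1-2\,(1/2)^{1-\alpha}$. This suggests splitting
\[
\frac{F_a(z)}{z}-1=\frac{F_a(z)}{z}-\Bigl(2\frac{F_a(z)}{z}-1\Bigr)+\Bigl(\frac{F_a(z)}{z}-\ldots\Bigr),
\]
so that one piece is controlled by $|F_a(z)/z|$, giving the $|f(z)-f(a)|$ term. The other piece should be $1-2w$ with $w=F_a(z)/z$, and its contribution is governed by $\operatorname{Re} w>(1/2)^{1-\alpha}$. Concretely, I would use $\operatorname{Re}(1-2w)<1-2^\alpha$, and then multiply by $|z-a|\,|f(a)/a|\le |z-a|(1-|a|)^{-(1-\alpha)}$ via Lemma~\ref{start}.

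\textbf{Main obstacle.} Two difficulties remain. The first is the factor $1/z$, which has modulus $\ge 1$ and is therefore unfavourable. The second is that $1-2^\alpha\le 0$, so the claimed bound is \emph{stronger} than a plain triangle inequality. A real-part bound on $1-2w$ cannot be converted into a reduction of a modulus unless the two pieces are suitably aligned.

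What I would try first is a Schwarz-lemma argument in the spirit of the proof of Theorem~\ref{CCC}. For fixed $a$, consider the analytic function
\[
z\mapsto \frac{1}{z}\Bigl(\frac{F_a(z)}{z}-1\Bigr)-\frac{F_a(z)}{z},
\]
which is analytic since $F_a(0)=0$ and $F_a'(0)=1$. The aim is to show that its real part, after a rotation, is bounded by $1-2^\alpha$. This would rest on the subordination $F_a(z)/z\prec\bigl(\tfrac{1}{1-z}\bigr)^{1-\alpha}$ for $F_a\in S^*_{(1+\alpha)/2}$.

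If this alignment cannot be achieved, I would check the inequality on the extremal Möbius maps of the Example in $CC_\alpha$. That would tell me whether the sign of the correction term must be reversed, that is, whether the correct statement has $2^\alpha-1$ in place of $1-2^\alpha$. In that case the triangle inequality together with $|1-2w|$-type bounds would suffice.
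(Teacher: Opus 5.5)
Your main line of attack cannot succeed, because the inequality as printed is false for every $\alpha\in(0,1)$. The identity $f(z)=z$ belongs to $CC_\alpha$, since the expression in \eqref{conv1} is identically $1$. For this $f$ the left-hand side vanishes, whereas the right-hand side equals
\[
|z-a|\Bigl(1-\frac{2^\alpha-1}{(1-|a|)^{1-\alpha}}\Bigr).
\]
This is negative whenever $z\neq a$ and $(1-|a|)^{1-\alpha}<2^\alpha-1$. For instance, $\alpha=1/2$, $a=0.9$, $z=0.5$ gives $0\le -0.12\ldots$. So no alignment or rotation argument can beat the triangle inequality here. Your fallback suspicion is correct: the provable statement has $2^\alpha-1=|1-2^\alpha|$ in place of $1-2^\alpha$, and the two agree only for $\alpha=0$. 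The paper's own proof also establishes only this corrected version. It reaches $\bigl|\frac{f(z)}{z}-\frac{f(a)}{a}\bigr|\le\bigl|f(z)-f(a)+(1-2^\alpha)\frac{f(a)}{a}(z-a)\bigr|$, and then applies the triangle inequality while keeping the negative coefficient $1-2^\alpha$ instead of its modulus.

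Even for the corrected inequality, your proposal lacks the decisive step, which is exactly what removes the factor $1/z$ you flagged. Put $h=F_a(z)/z$ (your $w$). The bound $\operatorname{Re}h>2^{\alpha-1}$ from Proposition~\ref{F_a1} says precisely that $h(z)$ is closer to $1$ than to $2^\alpha-1$. Hence $s=(h-1)/(h+1-2^\alpha)$ maps $\D$ into $\D$ with $s(0)=0$, and Schwarz's lemma gives $|h(z)-1|\le |z|\,|h(z)+1-2^\alpha|$. Insert this into your identity $\frac{f(z)}{z}-\frac{f(a)}{a}=\frac{z-a}{z}\,\frac{f(a)}{a}\,(h(z)-1)$. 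The factor $|z|$ cancels $1/|z|$ and yields the intermediate inequality displayed above. The triangle inequality and the upper bound $|f(a)/a|\le(1-|a|)^{-(1-\alpha)}$ from Lemma~\ref{start} then finish the argument. Your proposed splitting, with the estimate $\operatorname{Re}(1-2w)<1-2^\alpha$, cannot replace this step. A one-sided bound on a real part never controls a modulus, and without the Schwarz factor $|z|$, any bound on $|h-1|$ alone leaves the unfavourable $1/|z|$.
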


\begin{proof}
Fixed $a\in \D$ and let $h(z)=\frac{F_a(z)}{z}$. By Proposition \ref{F_a1} we have $\operatorname{Re}{h(z)}\geq \frac{1}{2^{1-\alpha}}$, then
\[
\operatorname{Re}\left\{
\frac{2^{1-\alpha}h(z)-1}{2^{1-\alpha}-1}
\right\}\ge 0.
\]
Hence, there exists a function $w:\mathbb{D} \to \mathbb{D}$, with $w(0)=0$, such that 
\[
\frac{2^{1-\alpha}h(z)-1}{2^{1-\alpha}-1}
=\frac{1+w(z)}{1-w(z)}.
\]
Therefore,
\[
H(z):=\frac{h(z)-1}{2-2^\alpha}=\frac{w(z)}{1-w(z)},
\]
which implies $w(z)=\frac{H(z)}{1+H(z)}$. By Schwarz's Lemma,
\[
\left|\frac{h(z)-1}{h(z)+1-2^\alpha} \right|\leq |z|
\]
equivalently,
\begin{equation}\label{C.10}
 \left|\frac{\dfrac{a}{f(a)}\dfrac{f(z)-f(a)}{z-a}-1}{\dfrac{a}{f(a)}\dfrac{f(z)-f(a)}{z-a}+1-2^\alpha}\right|\leq |z|.   
\end{equation}
It follows, by \eqref{C.10} and Lemma \ref{start}, that
\begin{align*}
  \left|\frac{f(z)}{z}-\frac{f(a)}{a}\right|&
\leq\left|f(z)-f(a)+(1-2^\alpha)\frac{f(a)}{a}(z-a)\right|\\& \leq  \left|f(z)-f(a)\right|+(1-2^\alpha)\frac{|z-a|}{(1-|a|)^{1-\alpha}}.
\end{align*}
\end{proof}
The following is the main result of this section, which generalizes Theorem~1 of \cite{FMR1997} to the class $CC_\alpha$. The argument follows the same strategy as in \cite{FMR1997}, with the key difference that the lower bound for $|f(z)/z|$ given by Lemma \ref{start} replaces the classical estimate $\operatorname{Re}\{f(z)/z\}>1/2$ for convex functions.

\begin{theo}\label{main}
Let $f(z)=z+a_2z^2+\cdots \in CC_\alpha$ and $0 \leq \alpha<1$. Then
\[
\operatorname{Re}\{ a_2 f(z)\}
\geq -1+\left(\frac{1}{2}\right)^{1-\alpha}+\dfrac{1-4\left(-1+\left(\frac{1}{2}\right)^{1-\alpha}\right)^2|z|^2}{{2(1+|z|)^{2(1-\alpha)}}},
\]
for all $z\in\mathbb{D}$.
\end{theo}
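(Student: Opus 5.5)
The plan is to run the Fournier--Ma--Ruscheweyh argument, which the proof of Theorem~\ref{CCC} has already set up. We start from inequality \eqref{conv3}, with $x$ renamed $z$, and now use it to control $\operatorname{Re}\{a_2f(z)\}$ instead of $\operatorname{Re}\{f(z)/z\}$. Write
\[
w=\frac{f(z)}{z},\qquad u=a_2f(z),\qquad r=|z|.
\]
Since $-\operatorname{Re}\zeta\le|\zeta|$, \eqref{conv3} gives
\[
2(1-\alpha)\operatorname{Re}u\;\geq\;\operatorname{Re}\{2w+\alpha-\alpha w^2-2\}-(1-\alpha)^2r^2|w|^2+|w|^2-2\operatorname{Re}w+1 .
\]
The terms $\pm 2\operatorname{Re}w$ cancel. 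Using $\operatorname{Re}(w^2)=2(\operatorname{Re}w)^2-|w|^2$, this becomes
\[
2(1-\alpha)\operatorname{Re}u\;\geq\;\alpha-1+(1+\alpha)|w|^2-2\alpha(\operatorname{Re}w)^2-(1-\alpha)^2r^2|w|^2 ,
\]
a lower bound for $\operatorname{Re}u$ that depends only on $|w|$, $\operatorname{Re}w$ and $r$.

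Next we eliminate $w$. The term $-2\alpha(\operatorname{Re}w)^2$ has the wrong sign, so we need an upper bound on $\operatorname{Re}w$. We use $(\operatorname{Re}w)^2\le|w|^2$, which gives
\[
\operatorname{Re}u\;\ge\;-\frac12+\frac{1-(1-\alpha)r^2}{2}\,|w|^2 .
\]
The coefficient $1-(1-\alpha)r^2$ is positive, so the right-hand side increases with $|w|$. Lemma~\ref{start} gives $|w|\ge(1+r)^{-(1-\alpha)}$, and substituting it yields a bound of the shape
\[
\text{constant}+\frac{1-(\cdots)r^2}{2(1+r)^{2(1-\alpha)}} .
\]
This has the same structure as the claimed one.

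The main obstacle is matching the exact constants in the statement, namely $-1+(1/2)^{1-\alpha}$ and the factor $4\bigl(-1+(1/2)^{1-\alpha}\bigr)^2$ multiplying $r^2$. These suggest the authors normalise differently. I expect they build the Schwarz--Pick auxiliary function from the half-plane $\operatorname{Re}\{f(z)/z\}\ge(1/2)^{1-\alpha}$ of Theorem~\ref{CCC}\,i) and Proposition~\ref{F_a1}, as in the proof of \eqref{Coro3}, rather than from $\operatorname{Re}G\ge\alpha$. Concretely, I would first try applying Schwarz--Pick to
\[
\frac{h(z)-1}{h(z)+1-2^{\alpha}},\qquad h=\frac{F_a(z)}{z},
\]
which maps $\D$ into $\D$ by the proof of \eqref{Coro3}. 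I would compute its first two Taylor coefficients in terms of $f(a)/a$ and $a_2f(a)$, in the manner of \cite{FMR1997}, and then repeat the two steps above: take real parts, then bound $|f(a)/a|$ from below by Lemma~\ref{start}. With $c=(1/2)^{1-\alpha}$ the half-plane parameter is $2c-1$, and this should produce exactly the constants $-1+c$ and $4(c-1)^2$. The delicate point there is again the sign of the quadratic term in $w$, which I would handle as above with $|\operatorname{Re}w|\le|w|$ (or, if needed, $\operatorname{Re}w\ge c$). As a sanity check, at $\alpha=0$ both routes must reduce to the bound of \cite{FMR1997}.
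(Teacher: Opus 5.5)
\textbf{There is a genuine gap: the proposal never reaches the stated inequality, and for $0<\alpha<1$ the statement is false.} The route you carry out proves a different bound. The route meant to produce the constants $-1+c$ and $4(c-1)^2$, with $c=(1/2)^{1-\alpha}$, is only a plan, and it cannot succeed.

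Every $f\in CC_\alpha$ has $a_2f(0)=0$, while the right-hand side at $z=0$ equals $-1+c+\tfrac12=c-\tfrac12>0$. By continuity the inequality also fails for all small $|z|$. Concretely, $f(z)=z$ gives $\frac{2z}{z-x}-\frac{z+x}{z-x}=1\ge\alpha$, so $f\in CC_\alpha$ with $\operatorname{Re}\{a_2f(z)\}\equiv 0$. Your sanity check at $\alpha=0$ cannot detect this; a check at $z=0$ with $\alpha>0$ does.

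The paper's proof breaks at the ``algebraic manipulations'' step. It applies Schwarz--Pick to $(G_a-c)/(1-c)$, $G_a=zF_a'/F_a$, and correctly gets $|2\beta G_2-G_1^2|\le 4\beta^2-|G_1|^2$ with $\beta=1-c$. Multiply by $|f(x)|^2$ and write $w=f(x)/x$, $u=a_2f(x)$. The left side is then exactly $|(2\beta-1)w^2+2w-4\beta u-(2\beta+1)|$, whereas the paper uses $|2w-(2\beta+1)-2u|$. The two differ by $(2\beta-1)(w^2-2u)$, which vanishes identically only when $\beta=\tfrac12$, i.e.\ $\alpha=0$. So the mismatch you flagged as the main obstacle is not a normalisation issue: the stated constants come from this dropped term.

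What you did prove is correct and is the right replacement. From \eqref{conv3} and Lemma~\ref{start},
\[
\operatorname{Re}\{a_2f(z)\}\ge-\frac12+\frac{1-(1-\alpha)|z|^2}{2(1+|z|)^{2(1-\alpha)}},\qquad z\in\D.
\]
This equals $0$ at $z=0$ and reduces to the Fournier--Ma--Ruscheweyh bound $-|z|/(1+|z|)$ at $\alpha=0$. In fact \eqref{conv3} is precisely the paper's inequality with the $w^2$ term kept and with the sharp half-plane $\operatorname{Re}G_a\ge(1+\alpha)/2$, i.e.\ $\beta=(1-\alpha)/2$. Repairing the paper's argument with $\beta=1-c$ instead yields only $-\tfrac12+\tfrac12\bigl(1-(2-2^{\alpha})|z|^2\bigr)|f(z)/z|^2$. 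That is weaker than your bound because $1-\alpha\le 2-2^{\alpha}$. You should present your bound as the theorem rather than try to match the constants in the statement.
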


Observe that when $\alpha=0$ the right-hand side reduces to $$\operatorname{Re}\{a_2f(z)\}\geq -\frac12+\frac{1-|z|^2}{2(1+|z|)^2}=-\frac{|z|}{(1+|z|)}\geq -\frac12.$$ This inequality can be found in \cite{FMR1997}. 
\begin{proof}
By Proposition \ref{F_a1}, we have $F_a \in \mathcal{S}^*_{(\alpha+1)/2}$,
which implies that $F_a \in \mathcal{S}^*_{\left( 1/2 \right)^{1-\alpha}}$. Let
\[
G_a(z):=\frac{z F_a'(z)}{F_a(z)},
\]
thus $\operatorname{Re}\{ G_a(z) \} \ge \left( \frac{1}{2} \right)^{1-\alpha}$, for all $z \in \mathbb{D}$, and $G_a(0) = 1$. Now define
\[
H(z)=\frac{G_a(z) - \left( \frac{1}{2} \right)^{1-\alpha}}
{1-\left(\frac{1}{2}\right)^{1-\alpha}}.
\]
Then there exists a function $\omega:\mathbb{D} \to \mathbb{D}$, with $\omega(0)=0$, such that
\[
H(z)=\frac{1+\omega(z)}{1-\omega(z)}.
\]
Assume that
\[
\omega(z)=w_1 z + w_2 z^2 + \cdots = z (w_1+w_2 z+\cdots),
\]
by the Schwarz--Pick lemma we have $|w_2|\leq 1-|w_1|^2$, which implies
\begin{equation} \label{F_alpha1}
 |2H_2-H_1^2|\leq 4-|H_1|^2,
\end{equation}
where $H(z) = 1+H_1 z+H_2 z^2+\cdots.$ Moreover,
\[
H_1=\frac{G_1}{1-\left( \frac{1}{2} \right)^{1-\alpha}}
\qquad \text{and} \qquad
H_2= \frac{G_2}{1 - \left( \frac{1}{2} \right)^{1-\alpha}},
\]
here $G_a(z) = 1 + G_1 z + G_2 z^2 + \cdots.$ Substituting into (\ref{F_alpha1}), we obtain
\begin{equation}\label{F_alpha2}
|2\beta G_2-G_1^2|\leq 4\beta^2-|G_1|^2,
\end{equation}
with $\beta := 1 - \left( \frac{1}{2} \right)^{1-\alpha}.$ Using the idea of Fournier \emph{et al.} in \cite{FMR1997}, with $x$ in place of $a$,
\[
G_1=\frac{1}{x}-\frac{1}{f(x)} 
\qquad \text{and} \qquad
G_2=\frac{1}{x^2}-\frac{1}{f(x)}\left(2a_2+\frac{1}{f(x)}\right).
\]
After replacing these expressions in (\ref{F_alpha2}) and performing some algebraic manipulations, we obtain
\begin{align*}
 |x|^2\left(2\operatorname{Re}\left\{\frac{f(x)}{x}\right\}-1\right)-|f(x)|^2(1-4\beta^2 |x|^2)& \geq 2|x|^2\left|\frac{f(x)}{x}-\left(\beta+\frac{1}{2}+a_2f(x)\right)
\right|\\
&\geq 2|x|^2\left(\operatorname{Re}\left\{\frac{f(x)}{x}\right\}-\beta-\frac{1}{2}
- \operatorname{Re}\{a_2f(x)\}
\right),
\end{align*}
it follows that
\[
\operatorname{Re}\{a_2f(x)\}\geq -\beta +\frac{1}{2}\left|\frac{f(x)}{x}\right|^2(1-4\beta^2|x|^2).\] Using Lemma \ref{start} we have $$\operatorname{Re}\{a_2f(x)\}\geq -\beta +\dfrac{1-4\beta^2|x|^2}{2(1+|x|)^{2(1-\alpha)}}\,.$$Thus, the proof is completed.
\end{proof}

It is worth noting that the previous theorem recovers Theorem~1 in \cite{FMR1997} when $\alpha=0$, since $CC_0=\mathcal{C}$. Moreover, the minimum value of the right side in the inequality of Theorem \ref{main} is $$-\beta+\frac{1-4\beta^ 2}{2\cdot 4^{1-\alpha}}=-\left(1-\left(\frac 12\right)^{1-\alpha}\right)+\frac{1-4\left(1-\left(\frac 12\right)^{1-\alpha}\right)^2}{2^{3-2\alpha}},$$ and holds when $z$ tends to 1. In addition, as $\alpha \to 1^-$, this quantity tends to $1/2$.

\subsection{Convex mappings of order $\alpha$}\label{convex order alpha}

The class $C_\alpha$ consists of locally univalent normalized analytic functions $f$ on $\D$ satisfying
$$\operatorname{Re}\left\{1+z\dfrac{f''}{f'}(z)\right\}\geq \alpha, \quad 0\leq \alpha <1.$$
Since $CC_\alpha \subsetneq C_\alpha$, the bounds here will be weaker than those of the previous section, reflecting the coarser nature of the one-point condition.

\begin{prop}\label{corbeta}
 Let $f\in C_\alpha$, $0\leq \alpha<1$, then $\operatorname{Re}\left\{\dfrac{f(z)}{z}\right\}\geq\left(\dfrac{1}{4}\right)^{1-\beta}$ for all $z\in \D$, where
\begin{equation}\label{beta}
\beta=\frac{2\alpha-1+\sqrt{(2\alpha-1)^2+8}}{4}.  
\end{equation}
\end{prop}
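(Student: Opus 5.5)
Apply Jack's theorem, quoted in the introduction: $C_\alpha\subset S^\ast_\beta$ with $\beta$ as in \eqref{beta}. This gives
\[
\operatorname{Re}\left\{\frac{zf'}{f}(z)\right\}\geq \beta, \qquad z\in\D .
\]
Note that $\beta\in[1/2,1)$ when $\alpha\in[0,1)$; indeed $\beta=1/2$ at $\alpha=0$, and $\beta$ is increasing in $\alpha$. From here we argue as in item i) of the proof of Theorem \ref{CCC}: we pass to an auxiliary function with a fixed starlikeness order, use a known bound for $\operatorname{Re}\{g(z)/z\}$, and take a power.

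Define $g$ by $g(0)=0$, $g'(0)=1$ and
\[
\frac{zg'}{g}(z)=\frac{\frac{zf'}{f}(z)-\beta}{1-\beta}.
\]
Then $g\in S^\ast=S^\ast_0$, and integrating gives $f(z)/z=(g(z)/z)^{1-\beta}$, with the principal branch chosen so that the value is $1$ at $0$. For starlike $g$ the function $g(z)/z$ does not vanish. However, $\operatorname{Re}\{g(z)/z\}$ has no positive lower bound on all of $S^\ast$, as the Koebe function shows. So we will use a different bound.

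The bound we want is the one suggested by the constant $(1/4)^{1-\beta}$: the Koebe distortion $|g(z)/z|\geq (1+|z|)^{-2}\geq 1/4$, together with control of the argument. Write $g(z)/z=\rho e^{i\theta}$. Then
\[
\operatorname{Re}\{f(z)/z\}=\rho^{1-\beta}\cos((1-\beta)\theta).
\]
For $g\in S^\ast$ one has $|\arg(g(z)/z)|\leq 2\arcsin|z|<\pi$. Since $\beta\geq 1/2$, it follows that $|(1-\beta)\theta|<\pi/2$, so the cosine is positive.

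The main obstacle is that a bare bound $\rho\geq 1/4$ does not give $\cos((1-\beta)\theta)\geq 1$. The modulus and argument estimates must be combined. The cleanest route is to use the sharper fact $\operatorname{Re}\{f(z)/z\}>1/2$ for starlike functions of order $1/2$; since $\beta\geq1/2$, we have $f\in S^\ast_{1/2}$. This gives $\operatorname{Re}\{f(z)/z\}>1/2\geq(1/4)^{1-\beta}$, because $1-\beta\leq 1/2$ implies $(1/4)^{1-\beta}\leq 1/2$.

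So the plan is as follows. First, apply Jack's inclusion to get $f\in S^\ast_\beta\subset S^\ast_{1/2}$. Second, invoke $\operatorname{Re}\{f(z)/z\}>1/2$ for $S^\ast_{1/2}$ (\cite{R1982}, p.~49), as already used in Theorem \ref{CCC}. Third, conclude with the elementary inequality $(1/4)^{1-\beta}\leq 1/2$. As a fallback, if a bound genuinely in the form $(\cdot)^{1-\beta}$ is intended, write $f(z)/z=(g(z)/z)^{2(1-\beta)}$ with $g\in S^\ast_{1/2}$ (choosing the order-$1/2$ normalization in the auxiliary function). Then apply the log-concavity of cosine exactly as in \eqref{kk}, which yields the stronger bound $(1/2)^{2(1-\beta)}=(1/4)^{1-\beta}$ directly, provided $2(1-\beta)\leq1$.
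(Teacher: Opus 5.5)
Your primary route fails at its last step, because the inequality $(1/4)^{1-\beta}\leq 1/2$ is backwards. The map $t\mapsto (1/4)^t$ is decreasing and $0<1-\beta\leq 1/2$, so in fact $(1/4)^{1-\beta}\geq (1/4)^{1/2}=1/2$. Equality holds only when $\beta=1/2$, i.e.\ $\alpha=0$. Hence $\operatorname{Re}\{f(z)/z\}>1/2$, which you get from $f\in S^\ast_{1/2}$, is strictly weaker than the claim for every $\alpha\in(0,1)$: the target constant rises from $1/2$ toward $1$ as $\alpha\to1^-$. The Koebe-type digression ($|g(z)/z|\geq 1/4$ together with $|\arg(g(z)/z)|\leq 2\arcsin|z|$) does not close this gap either, as you note yourself. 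Your later remark that $(1/4)^{1-\beta}$ is \emph{the stronger bound} is the correct comparison, and it contradicts the third step of your plan.

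What proves the proposition is your fallback, and it is the paper's argument. Jack's theorem gives $f\in S^\ast_\beta$. One then runs item i) of Theorem \ref{CCC} with $\gamma:=2\beta-1\in[0,1)$ in the role of $\alpha$, since $\beta=(1+\gamma)/2$.

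Concretely, set $zg'/g=(zf'/f-\gamma)/(1-\gamma)$. Then $\operatorname{Re}\{zg'/g\}\geq(\beta-\gamma)/(1-\gamma)=1/2$, so $\operatorname{Re}\{g(z)/z\}>1/2$. Integrating gives $f(z)/z=(g(z)/z)^{1-\gamma}$, with $1-\gamma=2(1-\beta)\in(0,1]$. This exponent range is exactly where $\alpha\geq 0$, hence $\beta\geq 1/2$, is used, because the log-concavity step in \eqref{kk} needs it. The computation in \eqref{kk} then yields $\operatorname{Re}\{f(z)/z\}>(1/2)^{2(1-\beta)}=(1/4)^{1-\beta}$.

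Make this argument the proof and discard the ``cleanest route.''
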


\begin{proof}
Since $f\in C_\alpha$, by Theorem 1 in \cite{J1971}, it follows that $f\in S^*_\beta$. Thus, the statement follows  as in the proof of i) in Theorem \ref{CCC}.
\end{proof}

\begin{theo}
 Let $f\in C_\alpha$, $0\leq \alpha<1$, then
 \begin{equation*}
 \operatorname{Re}\{a_2f(z)\}\geq -\frac{1}{2}+ \left(\frac{1}{4}\right)^{1-\beta}\left[\delta_x\left(\frac{1}{4}\right)^{1-\beta}|z|(1-|z|^2)+\frac{1}{2}(1-|z|)^2\right],
 \end{equation*}
 for all $x,\,z\in \D$, where $\beta$ is given by (\ref{beta}) and
 \begin{equation}\label{order1}
\delta_x=\frac{2\delta-1+\sqrt{(2\delta-1)^2+8}}{4}, 
\end{equation}
with $\delta=\alpha\dfrac{1-|x|}{1+|x|}.$
\end{theo}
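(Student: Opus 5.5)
We follow the scheme of Theorem \ref{main}, with the two-point kernel now built from an auxiliary convex-type function. Fix $x\in\D$ and set
\[
F_x(z)=\frac{xz}{f(x)}\,\frac{f(z)-f(x)}{z-x}.
\]
The first step is to show that $F_x$ is convex of order $\delta=\alpha\frac{1-|x|}{1+|x|}$.

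Since $f\in C_\alpha$, the function $1+zf''/f'$ has real part at least $\alpha$ and is subordinate to $\frac{1+(1-2\alpha)z}{1-z}$. The difference quotient $(f(z)-f(x))/(z-x)$ is an average of $f'$ along the segment $[x,z]$. We expect the Möbius factor $(1-|x|)/(1+|x|)$ to appear from a Harnack-type estimate when comparing values at $z$ and at $x$. Concretely, we plan to write
\[
1+\frac{zF_x''}{F_x'}(z)
\]
in terms of $f$ and apply Harnack's inequality to the positive harmonic function $\operatorname{Re}\{1+zf''/f'\}-\alpha$. This step is the main obstacle. If a direct computation fails, we would instead argue through Jack's lemma, as in \cite{J1971}, applied to $F_x$: first show $\operatorname{Re}\{1+zF_x''/F_x'\}\geq\delta$, then invoke Jack's theorem to conclude that $F_x\in S^*_{\delta_x}$ with $\delta_x$ given by \eqref{order1}. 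That is exactly the quantity in the statement.

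Next, let $G(z)=zF_x'(z)/F_x(z)$. Then $G(0)=1$, $\operatorname{Re} G\geq\delta_x$, and as in \cite{FMR1997},
\[
G_1=\frac1x-\frac1{f(x)},\qquad G_2=\frac1{x^2}-\frac1{f(x)}\Bigl(2a_2+\frac1{f(x)}\Bigr).
\]
Normalize by $H=(G-\delta_x)/(1-\delta_x)=(1+\omega)/(1-\omega)$ and apply Schwarz--Pick to $\omega/z$, exactly as in \eqref{F_alpha1}--\eqref{F_alpha2}. This gives
\[
|2(1-\delta_x)G_2-G_1^2|\leq 4(1-\delta_x)^2-|G_1|^2 .
\]
Substituting $G_1$ and $G_2$, multiplying by $|x|^2|f(x)|^2$ and passing to the real part, as at the end of the proof of Theorem \ref{main}, should produce an inequality of the shape
\[
\operatorname{Re}\{a_2f(x)\}\geq -\tfrac12+c_1\,\delta_x\,\Bigl|\tfrac{f(x)}{x}\Bigr|\,|x|(1-|x|^2)+\tfrac12\Bigl|\tfrac{f(x)}{x}\Bigr|^2(1-|x|)^2 .
\]
The linear term arises from the cross term $\operatorname{Re}\{f(x)/x\}$ weighted by $\delta_x$. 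The constant $-\tfrac12$ arises after completing the square.

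Finally, we replace $|f(x)/x|$ and $\operatorname{Re}\{f(x)/x\}$ by their lower bound $(1/4)^{1-\beta}$ from Proposition \ref{corbeta}; since $f\in S^*_\beta$, we have $|f(x)/x|\geq\operatorname{Re}\{f(x)/x\}$. Both terms are nonnegative, so the bound is monotone in $|f(x)/x|$ and the substitution is legitimate. Renaming the evaluation point $z$ and keeping the parameter $x$ free then gives the stated inequality. Checking the algebra that produces precisely the coefficients $|z|(1-|z|^2)$ and $\frac12(1-|z|)^2$ is routine but delicate; we would verify it against the case $\alpha=0$.
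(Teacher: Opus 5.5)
\textbf{There is a genuine gap: your first step is false.} The difference-quotient function $F_x(z)=\frac{xz}{f(x)}\frac{f(z)-f(x)}{z-x}$ need not be convex of order $\delta=\alpha\frac{1-|x|}{1+|x|}$, even when $\alpha=0$. Take $f(z)=z+z^2/4$. It is convex, since $1+zf''/f'=(1+z)/(1+z/2)$ has positive real part. With $x=-1/2$ one gets $F_x(z)=z+\frac{2}{7}z^2$, and
\[
1+\frac{zF_x''}{F_x'}=\frac{1+(8/7)z}{1+(4/7)z}\to-\frac13 \quad (z\to-1),
\]
so $F_x\notin\mathcal{C}$. The same family $z+az^2$ with extremal $a=\frac{1-\alpha}{2(2-\alpha)}$ gives counterexamples for small $\alpha>0$. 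So neither a Harnack argument nor Jack's lemma can give $\operatorname{Re}\{1+zF_x''/F_x'\}\geq\delta$.

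The missing idea is to replace the two-point quotient by the Koebe transform
\[
f_x(z)=\frac{f\bigl(\frac{z+x}{1+\overline{x}z}\bigr)-f(x)}{(1-|x|^2)f'(x)} .
\]
Write $w=\frac{z+x}{1+\overline{x}z}$. A direct computation shows that, for $|w|=1$,
\[
\operatorname{Re}\{1+zf_x''/f_x'(z)\}=\frac{|w-x|^2}{1-|x|^2}\operatorname{Re}\{1+wf''/f'(w)\}\geq\alpha\frac{1-|x|}{1+|x|}.
\]
By the minimum principle, applied to dilations of $f$, this gives $f_x\in C_\delta$. Jack's theorem then gives $f_x\in S^*_{\delta_x}$. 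This is the paper's route.

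\textbf{Your second step would not give the inequality you predict, even granting $\operatorname{Re}\{zF_x'/F_x\}\geq\delta_x$.} Put $t=f(x)/x$, $A=a_2f(x)$ and $b=1-\delta_x$. The coefficient form of Schwarz--Pick at the origin, as in Theorem~\ref{main}, yields
\[
\bigl|(2b-1)t^2+2t-(2b+1)-4bA\bigr|\leq 4b^2|x|^2|t|^2-|t-1|^2 .
\]
This involves $\operatorname{Re}\{t^2\}$ and has no factor $(1-|x|)^2$. For $\alpha=0$ (so $b=\frac12$) it is exactly the Fournier--Ma--Ruscheweyh bound $\operatorname{Re}\{A\}\geq-\frac12+\frac12|t|^2(1-|x|^2)$. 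So the coefficients you planned to check at $\alpha=0$ do not appear.

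The weights in the statement come from a different use of Schwarz--Pick. With $G=(zf_x'/f_x-\delta_x)/(1-\delta_x)$, the paper applies the first-order inequality $(1-|x|^2)|G'(-x)|\leq 2\operatorname{Re}\{G(-x)\}$ at the point $-x$, which the automorphism sends to $0$. There $f(0)=0$, $f'(0)=1$ and $f''(0)=2a_2$ enter: $G(-x)$ is expressed through $\frac{x}{f(x)(1-|x|^2)}$, and $G'(-x)$ is a multiple of $(1+|x|^2)\frac{f(x)}{x}-1-2a_2f(x)$. Taking real parts and using $1+|x|^2-2|x|=(1-|x|)^2$ gives
\[
\operatorname{Re}\{1+2a_2f(x)\}\geq(1-|x|)^2\operatorname{Re}\Bigl\{\frac{f(x)}{x}\Bigr\}+2\delta_x|x|(1-|x|^2)\Bigl|\frac{f(x)}{x}\Bigr|^2 .
\]
Note that the linear term carries $(1-|x|)^2$ and the quadratic term carries $\delta_x$, the reverse of your guessed shape. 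From here your final substitution of $(1/4)^{1-\beta}$ is fine.

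\textbf{On decoupling $x$ and $z$:} ``keeping $x$ free'' is not a valid step. Both your argument and the paper's only give the bound at the same point whose $\delta_x$ is used. Since $\delta_x$ decreases in $|x|$, this also covers $|x|\geq|z|$, but not independent $x$ and $z$.
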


\begin{proof}
Since $f\in C_{\alpha}$,
\begin{equation*}
f_x(z):=\frac{f(\frac{z+x}{1+\overline{x}z})-f(x)}{(1-|x|^2)f'(x)}\in C_{\delta},
\end{equation*}
for all $x\in \D$, where $\delta:=\dfrac{1-|x|}{1+|x|}$ (see \cite{CH2023}). Thus, from Theorem 1 in \cite{J1971}, it follows that $f\in S^*_{\delta_x}$ with $\delta_x$ given by (\ref{order1}). Therefore
\begin{equation*}
    \operatorname{Re}\left\{z\frac{f'_x}{f_x}(z)\right\}\geq \delta_x,
\end{equation*}
for all $z\in \D$. 
Let
\[
G(z) := \frac{z\frac{f'_x}{f_x}(z)-\delta_x}{1-\delta_x}=\frac{1}{1-\delta_x}\left[\frac{zf'(\frac{z+x}{1+\overline{x}z})\frac{1-|x|^2}{(1+\overline{x}z)^2}-\delta_x}{f(\frac{z+x}{1+\overline{x}z})-f(x)}\right].
\]
Then $G$ is analytic in $\mathbb{D}$, satisfies $G(0)=1$ and $\operatorname{Re}\{G(z)\} \geq 0$, for all $z \in \mathbb{D}$. Thus, there exists an analytic function
$w : \mathbb{D} \to \mathbb{D}$, with $w(0)=0$, such that
\[
G(z)=\frac{1+w(z)}{1-w(z)}.
\]
Observe that
\[
w(-x)=\frac{G(-x)-1}{G(-x)+1}\qquad
 \text{and} \qquad w'(z)=\frac{2G'(z)}{(G(z)+1)^2},\]
whence Schwarz-Pick’s lemma implies
\begin{equation}\label{SP}
|w'(-x)| \leq \frac{1-|w(-x)|^2}{1-|x|^2}.
\end{equation}
Substituting the above expressions into (\ref{SP}), we obtain
\begin{equation}\label{ineq1}
(1-|x|^2)|G'(-x)| \leq 2\operatorname{Re}\{G(-x)\}.
\end{equation}
On the other hand,
\begin{equation}\label{ineq2}
G(-x)=\frac{1}{1-\delta_x}\left[\frac{x}{f(x)(1-|x|^2)}-\delta_x\right], 
\end{equation}
and a direct computation shows that
\begin{equation}\label{derG}
G'(-x)=-\frac{1}{(1-|x|^2)^2(1-\delta_x)}\frac{x}{f^2(x)}\left[(1+|x|^2)\frac{f(x)}{x}-(1+2a_2f(x))
\right].
\end{equation}
Now, we substitute \eqref{ineq2} and \eqref{derG} into \eqref{ineq1} to obtain
\begin{align*}
\left|(1+|x|^2)\frac{f(x)}{x}-(1+2a_2f(x))\right|&\leq
2\left[\operatorname{Re}\left\{ \frac{x\overline{f(x)}}{|x|}\right\}-\frac{\left|f(x)\right|^2}{|x|}\delta_x(1-|x|^2)\right]\\
&= 2|x|\left[\operatorname{Re}\left\{ \frac{f(x)}{x}\right\}-\left|\frac{f(x)}{x}\right|^2\delta_x(1-|x|^2)\right],
\end{align*}
whence
\[
\operatorname{Re}\{1+2a_2f(x)\}\geq
\operatorname{Re}\left\{\frac{f(x)}{x}\right\}(1-|x|)^2+2|x|\left|\frac{f(x)}{x}\right|^2\delta_x(1-|x|^2).
\]
Using the estimate in Proposition \ref{corbeta}, we finally obtain
\[
\operatorname{Re}\{1+2a_2f(x)\}\geq\left(\frac{1}{4}\right)^{1-\beta}\left[(1-|x|)^2+2|x|\left(\frac14\right)^{1-\beta}\delta_x(1-|x|^2)
\right].
\]
\end{proof}

\subsection{Spherically convex mappings}
\label{subsec:sconv}

Spherically convex functions were introduced by Minda \cite{Minda1986} as univalent mappings $f$ on $\D$ with $f(0)=0$, $f'(0)=\alpha\in(0,1]$, whose image $f(\D)$ is a spherically convex subset of the Riemann sphere, meaning convex with respect to spherical geodesics. We refer the reader to \cite{MP2000} for further properties of this class.

\begin{theo}
Let $f(z)=\alpha z + a_2 z^2 + \cdots$, $0<\alpha\leq 1$, be a spherically convex function. Then
\begin{equation*}
    \operatorname{Re}\{a_2f(z)\}\geq -\frac{\alpha^2}{2},
\end{equation*}
for all $z\in \D$. The constant $-\alpha^2/2$ is the best possible.
\end{theo}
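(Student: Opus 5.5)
The plan is to imitate the two-point argument of \cite{FMR1997} used in Theorem~\ref{main}, replacing the Euclidean two-point kernel by its spherical analogue. The first step is to recall the analytic characterization of spherical convexity (Minda \cite{Minda1986}, see also \cite{MP2000}):
\[
\operatorname{Re}\left\{1+z\frac{f''}{f'}(z)-\frac{2\,\overline{f(z)}\,zf'(z)}{1+|f(z)|^2}\right\}>0,\qquad z\in\D .
\]
I also intend to use the invariance of the class under two kinds of maps:
\begin{itemize}
\item disk automorphisms $\varphi_x(z)=(z+x)/(1+\overline{x}z)$;
\item rotations of the sphere $T_x(w)=(w-f(x))/(1+\overline{f(x)}w)$.
\end{itemize}
The claim to establish is that for each $x\in\D$ the function $h_x=T_x\circ f\circ\varphi_x$ is again spherically convex with $h_x(0)=0$. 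After multiplication by a unimodular constant, its derivative at $0$ is $(1-|x|^2)|f'(x)|/(1+|f(x)|^2)\in(0,1]$.

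The second step is to extract a one-parameter starlikeness statement from spherical convexity at the base point. Since spherically convex mappings fixing $0$ are Euclidean starlike, I would apply this to $h_x$ at every $x$ and obtain the two-point inequality
\[
\operatorname{Re}\left\{\frac{zh_x'(z)}{h_x(z)}\right\}\ \ge\ c
\]
with a constant $c$ independent of $x$. I expect $c=1/2$ up to a correction involving $|h_x|$; the first thing to try is the limiting case in which the image is a hemisphere. Rewriting $zh_x'/h_x$ in terms of $f$ at the point $\zeta=\varphi_x(z)$ gives a kernel of the form
\[
\frac{(1+\overline{f(x)}f(\zeta))\,\,\cdot\,\,\text{(Euclidean kernel)}}{1+|f(x)|^2}.
\]
Here the Euclidean kernel is $zf'(z)/(f(z)-f(x))$, corrected by the Möbius factors. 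The key choice is to evaluate at $z=-x$, that is at $\zeta=0$, exactly as in the proof of the $C_\alpha$ theorem. There the Schwarz--Pick inequality $(1-|x|^2)|G'(-x)|\le 2\operatorname{Re}G(-x)$ for $G=(zh_x'/h_x-c)/(1-c)$ involves only $f(x)$, $x$ and $a_2=f''(0)/2$, together with $f'(0)=\alpha$.

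The third step is algebraic. I would expand $G(-x)$ and $G'(-x)$ and rearrange the Schwarz--Pick inequality into the form
\[
\operatorname{Re}\{\alpha^2+2a_2f(x)\}\ \ge\ \text{(nonnegative terms)}.
\]
This mirrors the step leading to $\operatorname{Re}\{1+2a_2f\}\ge\cdots$ in the $C_\alpha$ proof, with $\alpha^2$ appearing because $a_2f$ scales like $f'(0)^2$. Dropping the nonnegative terms gives $\operatorname{Re}\{a_2f(z)\}\ge-\alpha^2/2$. Proving that the leftover terms are genuinely nonnegative is where the spherical correction $2\overline{f}zf'/(1+|f|^2)$ must be controlled, and I expect this to be the main obstacle. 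If the direct computation is messy, the fallback is to work with $\Phi=\arctan$-type coordinates, or with the normalized function $f/\alpha$ composed with the spherical metric, so that the problem reduces to the convex case $\operatorname{Re}\{a_2f\}\ge-1/2$ of \cite{FMR1997} after rescaling by $\alpha^2$.

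For sharpness, I would take the extremal spherically convex maps onto a spherical cap or hemisphere, namely Möbius images of the half-plane map $z/(1-z)$ normalized to $f'(0)=\alpha$, and let $z$ tend radially to the boundary point where $a_2f(z)$ is real and negative. The goal there is to check that $\operatorname{Re}\{a_2f(z)\}\to-\alpha^2/2$.
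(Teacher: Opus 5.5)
\paragraph{The inequality: a different route that works.} Your route differs from the paper's, and it works more easily than you expect. The paper never moves $x$ to the origin. It takes the Mejía--Pommerenke two-point condition \eqref{sphe1} with a free parameter $\omega\in\overline{f(\D)}$, applies the second-coefficient form of Schwarz--Pick at $z=0$ to the Carathéodory function $A$, and only at the end sets $\omega=f(x)$. Your transform $h_x=T_x\circ f\circ\varphi_x$ needs only two things: invariance of spherical convexity, and the fact that $h_x$ is starlike.

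Carry out your computation with $u=f(x)$ and $P=zh_x'/h_x$. One has $h_x(-x)=-u$, $h_x'(-x)=\alpha(1+|u|^2)/(1-|x|^2)$ and $h_x''(-x)=2(1+|u|^2)(a_2-\alpha\bar x-\alpha^2\bar u)/(1-|x|^2)^2$. The spherical correction enters only through $T_x''(0)=-2\bar u(1+|u|^2)$. Hence
\[
P(-x)=\frac{\alpha(1+|u|^2)}{1-|x|^2}\,\frac{x}{u},\qquad
\frac{(1-|x|^2)^2u^2}{x(1+|u|^2)}\,P'(-x)=\alpha^2(1-|u|^2)+2a_2u-\alpha(1+|x|^2)\frac{u}{x}.
\]
The Schwarz--Pick inequality $(1-|x|^2)|P'(-x)|\le 2\bigl(\operatorname{Re}P(-x)-c\bigr)$ then rearranges to
\[
\operatorname{Re}\{\alpha^2+2a_2f(x)\}\ \ge\ \alpha^2|f(x)|^2+\alpha(1-|x|)^2\operatorname{Re}\left\{\frac{f(x)}{x}\right\}+\frac{2c(1-|x|^2)|x|}{1+|f(x)|^2}\left|\frac{f(x)}{x}\right|^2 .
\]
Since $\operatorname{Re}P(-x)>0$ is equivalent to $\operatorname{Re}\{f(x)/x\}>0$, every term on the right is nonnegative for any $c\ge 0$. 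So you need not determine $c$: plain starlikeness ($c=0$) suffices, and the obstacle you anticipated does not arise.

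Your fallback is even shorter. Putting $\omega=0$ in \eqref{sphe1} shows that $f/\alpha$ is Euclidean convex, so the convex bound of \cite{FMR1997} rescales directly to $\operatorname{Re}\{a_2f\}\ge-\alpha^2/2$. One small slip: the Möbius factor in your kernel is $(1+|f(x)|^2)/(1+\overline{f(x)}f(\zeta))$, not its reciprocal.

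\paragraph{Sharpness: this step fails as planned.} With $f(0)=0$ and $f'(0)=\alpha$, the hemisphere maps are the rotations of $f_\alpha(z)=\alpha z/(1-\beta z)$, $\beta=\sqrt{1-\alpha^2}$. For these, $a_2f_\alpha(z)=\alpha^2\beta z/(1-\beta z)$, whose real part tends to $-\alpha^2\beta/(1+\beta)>-\alpha^2/2$ as $z\to-1$. The inequality above becomes an equality in that limit.

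In fact no spherically convex function with fixed $f'(0)=\alpha$ gets close to $-\alpha^2/2$. Since $f/\alpha$ is convex, $\operatorname{Re}\{f(x)/x\}>\alpha/2$ and $|f(x)|\ge\alpha|x|/(1+|x|)$. So the right-hand side above is at least $\alpha^4|x|^2/(1+|x|)^2+\alpha^2(1-|x|)^2/2$, which has a positive lower bound depending only on $\alpha$.

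The paper's sharpness argument uses the same $f_\alpha$ and obtains the same value $-\alpha^2\beta/(1+\beta)$. What can actually be proved is that the factor $1/2$ cannot be lowered uniformly in $\alpha$, because $\beta/(1+\beta)\to 1/2$ as $\alpha\to0^+$. You should state and prove sharpness in that form rather than aim for the limit $-\alpha^2/2$.
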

\begin{proof}
    Let $\omega\in \overline{f(\D)}$. Since $f$ is s-convex and $f(0)=0$, we know (see \cite{MP2000}) that 
\begin{equation}\label{sphe1}
    \operatorname{Re}\left\{\frac{x+z}{x-z}+\frac{2zf'(z)}{f(z)-f(x)}
    \frac{1+\bar{\omega} f(x)}{1+\bar{\omega} f(z)}\right\}\geq0,    
    \end{equation}
    for all $x,z\in \D$. Fix $x\in \D$ and define
\[
A(z)=\frac{x+z}{x-z}+\frac{2zf'(z)}{f(z)-f(x)}    \frac{1+\bar{\omega} f(x)}{1+\bar{\omega} f(z)}.
\]
A straightforward calculation shows that
\[
A'(0)=\frac{2}{x} - \frac{2\alpha(1+\bar{\omega}f(x))}{f(x)} \quad\text{and} \quad A''(0)=\frac{4}{x^2}-\frac{4(1+\bar{\omega}f(x))}{f^2(x)}\left(\alpha^2(1-\bar{\omega}f(x))+ 2a_2 f(x)\right).
\]
On the other hand, by (\ref{sphe1}) there is $k:\D \to \D$, with $k(0)=0$, such that
\begin{equation*}
   A(z)=\frac{1+k(z)}{1-k(z)}.
\end{equation*}
It follows from the Schwarz-Pick lemma
\[
\frac{|A''(0)-A'(0)^2|}{4}
\leq 1-\left|\frac{A'(0)}{2}\right|^2,
\]
therefore
\begin{multline*}
\left|\frac{1}{x^2}-\frac{1+\bar{\omega}f(x)}{f(x)^2}(\alpha^2(1-\bar{\omega}f(x)) + 2a_2 f(x))-\left(\frac{1}{x}-\frac{\alpha(1+\bar{\omega}f(x))}{f(x)}\right)^2
\right|\leq \\ 1-\left|\frac{1}{x}-\frac{\alpha(1+\bar{\omega}f(x))}{f(x)}
\right|^2,
\end{multline*}
equivalently
\begin{multline*}
2|x|^2|1+\bar{\omega}f(x)|\left|\alpha \frac{f(x)}{x}-(a_2 f(x)+\alpha^2)\right|\leq \\ (|x|^2-1)|f(x)|^2+2Re\left\{\alpha(1+\bar{\omega}f(x))x\overline{f(x)}\right\}-\alpha^2|x|^2|1+\bar{\omega}f(x)|^2.
\end{multline*}
Using some algebraic manipulations, we obtain
\[
\operatorname{Re}\{a_2f(x)+\alpha^2\}\geq\alpha \operatorname{Re}\left\{\frac{f(x)}{x}\right\}-\alpha \operatorname{Re}\left\{ \frac{1+\omega\overline{f(x)}}{|1+\bar{\omega}f(x)|} \frac{f(x)}{x}\right\} +\frac{\alpha^2}{2}|1+\bar{\omega}f(x)|+\frac{|f(x)|^2(1-|x|^2)}{2|x|^2|1+\bar{\omega}f(x)|}.
\]
Now, with $\omega=f(x)$ in the above inequality, it follows that
\[
\operatorname{Re}\{a_2f(x)\}\geq-\frac{\alpha^2}{2}+\frac{\alpha^2}{2}|f(x)|^2+\frac{|f(x)|^2 (1-|x|^2)}{2|x|^2(1+|f(x)|^2)}.
\]
For the final statement, we consider the function 
$$f_{\alpha}(z)=\frac{\alpha z}{1-\beta z}=\alpha z+ \alpha\beta z^2+\cdots, \qquad \beta=\sqrt{1-\alpha^2},$$
which maps $\D$ conformally onto a hemisphere of the Riemann sphere; hemispheres are spherically convex by definition (see \cite{MP2000}, Proposition 2). Observe that 
\[
\operatorname{Re}\{a_2f_{\alpha}(z)\}=\alpha^2 \operatorname{Re}\left\{\frac{\beta z}{1-\beta z}\right\}
\]
and the infimum of $\operatorname{Re}\left\{\dfrac{\beta z}{1-\beta z}\right\}$ over $z\in \D$ is $-\dfrac{\beta}{1+\beta}\in(-1/2,0)$.
\end{proof}

\subsection{Uniformly starlike functions}
\label{subsec:UCV}

Goodman \cite{G1991, Goodman1991} introduced the class $\mathcal{UST}$ of uniformly starlike functions. A normalized function $f$ belongs to $\mathcal{UST}$ if it maps every circular arc $\gamma\subset\D$ with center $a\in \D$ onto a starlike arc with respect to $f(a)$. The analytic characterization is as follows
\begin{equation*}
    f\in\mathcal{UST} \iff \operatorname{Re}\left\{\frac{f'(z)(z-x)}{f(z)-f(x)}\right\}>0, \qquad x,z\in \D.
\end{equation*}
Note that the condition is again of two-point type, placing these classes naturally alongside $CC_\alpha$ in the context of this paper.

\begin{theo}
Let $f \in \mathcal{UST},$ then
\[
\operatorname{Re}\{a_2 f(z)\}\geq -\frac{1}{2}+\frac{1}{2}(1-2|z|)\operatorname{Re}\left\{\frac{f(z)}{z}\right\},
\]
for all $z\in\mathbb{D}$.
\end{theo}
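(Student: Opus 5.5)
The plan is to run the same Schwarz--Pick argument as in the earlier sections, but applied directly to the two-point kernel of $\mathcal{UST}$. We need no prior bound on $f(z)/z$, since the statement keeps $\operatorname{Re}\{f(z)/z\}$ on the right-hand side.

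Fix $x\in\D\setminus\{0\}$ and define
\[
A(z)=\frac{f'(z)(z-x)}{f(z)-f(x)},\qquad z\in\D,
\]
extended by continuity at $z=x$ (where $A(x)=1$). This $A$ is analytic in $\D$ because $f$ is univalent, and by the characterization of $\mathcal{UST}$ it satisfies $\operatorname{Re}A>0$. Unlike in the previous proofs, $A(0)=x/f(x)$ is not $1$. So instead of normalizing, I use the Carathéodory/Schwarz--Pick estimate for functions of positive real part:
\[
|A'(0)|\le 2\operatorname{Re}A(0).
\]
This follows by composing $A$ with the Möbius map $w\mapsto (w-A(0))/(w+\overline{A(0)})$ of the right half-plane onto $\D$ and applying Schwarz's lemma.

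The main computation is $A'(0)$, done by logarithmic differentiation:
\[
\frac{A'}{A}(0)=\frac{f''(0)}{f'(0)}+\frac{1}{0-x}-\frac{f'(0)}{0-f(x)}=2a_2-\frac1x+\frac1{f(x)},
\]
so that
\[
A'(0)=\frac{x}{f(x)^2}\Bigl(1+2a_2f(x)-\frac{f(x)}{x}\Bigr).
\]
Multiply the inequality $|A'(0)|\le 2\operatorname{Re}\{x/f(x)\}$ by $|f(x)|^2/|x|$. On the right this uses $\operatorname{Re}\{x/f(x)\}\,|f(x)|^2=\operatorname{Re}\{x\overline{f(x)}\}=|x|^2\operatorname{Re}\{f(x)/x\}$. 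The result is
\[
\Bigl|1+2a_2f(x)-\frac{f(x)}{x}\Bigr|\le 2|x|\operatorname{Re}\Bigl\{\frac{f(x)}{x}\Bigr\}.
\]

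Finally, take real parts: $\operatorname{Re}\{1+2a_2f(x)\}\ge \operatorname{Re}\{f(x)/x\}-2|x|\operatorname{Re}\{f(x)/x\}$. Rearranging gives $\operatorname{Re}\{a_2f(x)\}\ge -\tfrac12+\tfrac12(1-2|x|)\operatorname{Re}\{f(x)/x\}$. The case $x=0$ is trivial, since both sides equal $0$ there.

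The only delicate points are the following:
\begin{itemize}
\item justifying analyticity of $A$ at $z=x$ via univalence of $f$;
\item using the half-plane form of Schwarz--Pick, rather than the normalized form $A(0)=1$ used in the earlier sections;
\item tracking the conjugation when converting $\operatorname{Re}\{x/f(x)\}$ into $\operatorname{Re}\{f(x)/x\}$.
\end{itemize}
I expect the algebra of $A'(0)$ to be the main place where errors could enter.
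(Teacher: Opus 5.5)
Your proof is correct and follows essentially the same argument as the paper: the same kernel $A$, the same half-plane estimate $|A'(0)|\le 2\operatorname{Re}A(0)$ with $A(0)=x/f(x)$, the same value of $A'(0)$ (you get it by logarithmic differentiation, the paper by direct differentiation), and the same final step of taking real parts in $\bigl|f(x)/x-1-2a_2f(x)\bigr|\le 2|x|\operatorname{Re}\{f(x)/x\}$. Your remarks on analyticity at $z=x$ and on the trivial case $x=0$ are small additions that the paper leaves implicit.
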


\begin{proof}
Fix $x\in\mathbb{D}$ and define
\[
A(z)=\frac{f'(z)(z-x)}{f(z)-f(x)}, \qquad z\in \D.
\]
Since $f\in\mathcal{UST}$, we have $\operatorname{Re}\{A(z)\}>0$, which implies that
\begin{equation}\label{UST1}
  |A'(0)| \leq 2\operatorname{Re}\{A(0)\}.
\end{equation}
We observe that, by direct computation,
\[
A(0)=\frac{x}{f(x)}
\qquad \text{and}\qquad
A'(0)=\frac{x}{f^2(x)}-\frac{1-2a_2 x}{f(x)}.
\]
Thus, substituting into inequality \eqref{UST1}, we obtain
\[
\left|x-(1-2a_2x)f(x)\right|\leq 2 |f(x)|^2 \operatorname{Re}\left\{\frac{x}{f(x)}\right\},
\]
equivalently,
\[
\left|\frac{f(x)}{x}-1-2a_2f(x)\right|\leq 2|x| \operatorname{Re}\left\{\frac{f(x)}{x}\right\}.
\]
It follows that
\[
\operatorname{Re}\{2a_2 f(x)\}\geq -1+(1-2|x|) \operatorname{Re}\left\{\frac{f(x)}{x}\right\}.
\]
\end{proof}
Recall that in the previous theorem $\operatorname{Re}\{f(x)/x\}>0$, since $f\in S^*$. 
\subsection{Nehari classes}
\label{subsec:Nehari}

Schwarzian derivative of a locally univalent function $f$ is defined by
\[
Sf=\left(\frac{f''}{f'}\right)'-\frac{1}{2}\left(\frac{f''}{f'}\right)^2,
\]
a quantity that vanishes precisely for Möbius transformations and measures, in a precise sense, how far $f$ deviates from them. The Nehari class $\mathcal{N}$ consists of all locally univalent analytic functions on $\D$ with
\[
|Sf(z)|\leq \frac{2}{(1-|z|^2)^2},
\]
and Nehari's theorem states that every $f\in\mathcal{N}$ is univalent. More generally, the class $\mathcal{N}_t$, formally introduced in \cite{COP1996}, consists of locally univalent analytic functions satisfying
\[ 
|Sf(z)|\leq \frac{2t}{(1-|z|^2)^2}, \quad 0\leq t \leq 1;
\]
note that $\mathcal{N}_1=\mathcal{N}$, while $\mathcal{N}_0$ consists only of Möbius transformations.

\begin{theo}
   Let $f \in \mathcal{N}_t$, $0\leq t<1$, be normalized by 
$f(z)=z+a_2z^2+ \cdots$, $z \in \D$. Then 
\begin{equation*}
  \operatorname{Re}\{a_2f(z)\}\geq -\frac{1}{2} -\frac{t}{4(t+1)}\left[\left(\frac{1+|z|}{1-|z|}\right)^{\sqrt{1+t}}-1\right]^2,
\end{equation*}
for all $z\in \D$.
\end{theo}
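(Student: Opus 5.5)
The plan is to linearize via the Schwarzian and then combine Sturm-type comparison with the identity $\operatorname{Re}\{w/(1-w)\}=-\tfrac12+\frac{1-|w|^2}{2|1-w|^2}$. Write $q=\tfrac12 Sf$, so that $|q(z)|\le t(1-|z|^2)^{-2}$. Take the two solutions $u_1,v$ of $u''+qu=0$ with $u_1(0)=0$, $u_1'(0)=1$ and $v(0)=1$, $v'(0)=0$. Then $\varphi:=u_1/v$ is the unique normalized map with $S\varphi=Sf$ and vanishing second coefficient, and $f$ is the post-composition of $\varphi$ with a Möbius map fixing $0$ to first order:
\[
f=\frac{\varphi}{1-a_2\varphi},\qquad a_2f=\frac{a_2\varphi}{1-a_2\varphi}.
\]
Set $w=a_2\varphi(z)$, so that $a_2f(z)=w/(1-w)$. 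By the identity above, the deviation of $\operatorname{Re}\{a_2f\}$ below $-\tfrac12$ is governed by how far $|w|$ can exceed $1$, i.e.\ by the size of $\varphi$ and of $a_2$.

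Next I would bound these two quantities separately, using the comparison function of $\mathcal N_t$. For $\varphi$, a Sturm comparison (as in Chuaqui--Osgood and the papers introducing $\mathcal N_t$) along the radius $[0,z]$ should show that the extremal is the odd map
\[
F_t(z)=\frac{1}{\sqrt{1+t}}\,\frac{\left(\frac{1+z}{1-z}\right)^{\sqrt{1+t}}-1}{\left(\frac{1+z}{1-z}\right)^{\sqrt{1+t}}+1}.
\]
For $a_2$, I would use Koebe transforms: $\mathcal N_t$ is invariant under $f\mapsto \bigl(f\circ T_x-f(x)\bigr)/\bigl((1-|x|^2)f'(x)\bigr)$, since $|Sf|(1-|z|^2)^2$ is Möbius invariant. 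Together with the local coefficient estimate $|a_2|\le$ const (obtained from $\varphi$ not being injective past a pole, or from Nehari univalence), this should control $a_2$.

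Rather than bounding $|w|$ crudely, I would aim for a cleaner route through $v$ directly, expecting $1+a_2f$ to be a ratio of solutions of $u''+qu=0$. Writing $u_2=v-a_2u_1$ gives $f=u_1/u_2$, $1/f=v/u_1-a_2$ and
\[
1+2a_2f=\frac{v+a_2u_1}{v-a_2u_1}.
\]
Setting $g=v-1$, the integral equation $g(z)=-\int_0^z(z-s)q(s)v(s)\,ds$ together with Gronwall along the radius should give
\[
|v(z)-1|\le \frac{\sqrt t}{2\sqrt{1+t}}\left[\left(\frac{1+r}{1-r}\right)^{\sqrt{1+t}}-1\right],
\]
which is exactly the size appearing in the statement, since its square is $\frac{t}{4(1+t)}\bigl[(\frac{1+r}{1-r})^{\sqrt{1+t}}-1\bigr]^2$. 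The final step then mirrors the earlier subsections: express $\operatorname{Re}\{a_2f\}+\tfrac12$ as a nonnegative FMR-type quantity (the case $t\to0$ is the Möbius case, where it is exactly the Fournier--Ma--Ruscheweyh identity) minus an error that is quadratic in $v-1$. Plugging in the bound on $|v-1|$ then yields the theorem.

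The main obstacle is this last algebraic step: isolating $\operatorname{Re}\{a_2f\}+\tfrac12$ so that the perturbation enters only through $|v-1|^2$ with constant exactly $1$, with no linear term in $v-1$ and no factor involving $a_2$. I would first try working with $1/f=v/u_1-a_2$ and evaluating $\operatorname{Re}\{a_2f\}=\operatorname{Re}\{a_2u_1/(v-a_2u_1)\}$. Failing that, I would apply the Schwarz--Pick argument of the preceding subsections to the Koebe transform at $z$, and treat the Schwarzian as a perturbation of the Möbius identity.
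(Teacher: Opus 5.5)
Your plan never actually proves the inequality, and the route you finally commit to cannot work. The step you flag as the main obstacle requires $\operatorname{Re}\{a_2f\}+\tfrac12\ge -|v-1|^2$, that is, a nonnegative FMR-type term minus an error quadratic in $v-1$ with constant $1$. This inequality is false.

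Take $b=\sqrt{1+t}$ and $f(z)=\frac{1}{2b}\bigl[\bigl(\frac{1+z}{1-z}\bigr)^{b}-1\bigr]$. Its Schwarzian is $-2t/(1-z^2)^2$, so $f\in\mathcal{N}_t$. For this $f$, $a_2=b$ and $v(z)=\sqrt{1-z^2}\,\cosh\bigl(b\operatorname{artanh}z\bigr)$. On the imaginary axis, write $\frac{1+z}{1-z}=e^{i\psi}$. Then $\operatorname{Re}\{a_2f\}+\tfrac12=\tfrac12\cos(b\psi)$ and $v=\cos(b\psi/2)/\cos(\psi/2)$. As $\psi\to\pi/2$:
the deficit below $-\tfrac12$ tends to $\tfrac12\sin\bigl((b-1)\pi/2\bigr)\approx\frac{\pi}{4}(b-1)$;
$|v-1|^2$ tends to $\bigl(1-\sqrt2\cos(b\pi/4)\bigr)^2\approx\frac{\pi^2}{16}(b-1)^2$.
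Concretely, for $t=1/2$ and $\psi=1.5$ these are about $0.13$ and $0.03$. So $v-1$ enters linearly, not quadratically.

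Your estimate for $|v-1|$ is also read off from the target rather than derived. The quantity $\frac{\sqrt t}{2\sqrt{1+t}}\bigl[\bigl(\frac{1+r}{1-r}\bigr)^{\sqrt{1+t}}-1\bigr]$ is simply $\sqrt t$ times Pommerenke's growth bound for $|f|$. That is where the right-hand side of the theorem really comes from.

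The argument closes along the route you set aside, which is exactly the paper's proof. With $\varphi=f/(1+a_2f)$ (the paper's $g$), the needed input is the Chuaqui--Osgood bound $|\varphi|\le 1/\sqrt{1-t}$. The extremal you guessed is the wrong one: $\frac{1}{\sqrt{1-t}}\tanh\bigl(\sqrt{1-t}\operatorname{artanh}z\bigr)$ is odd, normalized, lies in $\mathcal{N}_t$, and tends to $1/\sqrt{1-t}$ along $(0,1)$.

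Combine this with Pommerenke's sharp estimate $|a_2|\le\sqrt{1+t}$. Expanding $|1+a_2f|^2\ge(1-t)|f|^2$ gives $2\operatorname{Re}\{a_2f\}\ge -1+(1-t-|a_2|^2)|f|^2\ge -1-2t|f|^2$. In your own notation the same thing happens at once: since $|1-w|=1/|1+a_2f|$ and $|1+a_2f|=|f|/|\varphi|$, the deficit $\frac{|w|^2-1}{2|1-w|^2}$ equals $\frac12\bigl(|a_2|^2-|\varphi|^{-2}\bigr)|f|^2\le t|f|^2$. Pommerenke's growth bound $|f(z)|\le\frac{1}{2\sqrt{1+t}}\bigl[\bigl(\frac{1+|z|}{1-|z|}\bigr)^{\sqrt{1+t}}-1\bigr]$ then yields the theorem.

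Two ingredients are indispensable here, and your plan secures neither. One is the sharp constant $\sqrt{1+t}$ for $|a_2|$; an unspecified constant from Koebe transforms would not produce $\frac{t}{4(t+1)}$. The other is a growth bound for $|f|$ itself.
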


\begin{proof}
We define 
\begin{equation*}
  g(z)=\frac{f(z)}{1+a_2f(z)}, \quad z\in \D.
\end{equation*}  
It is easy to show that $g(0)=0$, $g'(0)=1$, $g''(0)=0$,  $g \in \mathcal{N}_t$, $0\leq t<1$. By Theorem\,1 in \cite{CO1993}
\[\frac{1}{\sqrt{1+t}}\leq |g(z)| \leq \frac{1}{\sqrt{1-t}},\]
for all $z\in \D$. Thus, 
\begin{equation}\label{N1}
  1-t \leq \frac{|1+a_2f(z)|^2}{|f(z)|^2}=\frac{1+|a_2|^2|f(z)|^2+2\operatorname{Re}\{a_2f(z)\}}{|f(z)|^2}.
\end{equation}
On the other hand, Ch. Pommerenke in \cite{P1964} proved that
\begin{equation}\label{N2}
  |a_2|\leq \sqrt{1+t} \quad \text{and} \quad |f(z)|\leq \frac{1}{2\sqrt{t+1}}\left[\left(\frac{1+|z|}{1-|z|}\right)^{\sqrt{1+t}}-1\right],
\end{equation}
for all $z\in \D$. Therefore, by \eqref{N1} and \eqref{N2}
\begin{equation*}
    2\operatorname{Re}\{a_2f(z)\}\geq -1-2t|f(z)|^2\geq -1-\frac{t}{2(t+1)}\left[\left(\frac{1+|z|}{1-|z|}\right)^{\sqrt{1+t}}-1\right]^2.
\end{equation*}
\end{proof}

\begin{rem}
As $t\to 0^+$ the lower bound tends to $-1/2$, consistently with the fact that $\mathcal{N}_0$ reduces to the class of Möbius transformations for which $\sigma(\mathcal{N}_0)=\{w:\operatorname{Re}\{w\}>-1/2\}$, matching the classical result for convex mappings.
\end{rem}

\end{document}